\tikzset{
	block/.style={
		draw, 
		rectangle, 
		minimum height=1.5cm, 
		minimum width=2.7cm, align=center
	}, 
	line/.style={->,>=latex'}, cross/.style={cross out, draw=black, minimum size=2*(#1-\pgflinewidth), inner sep=0pt, outer sep=0pt},
	%default radius will be 1pt. 
	cross/.default={1pt}
}
\tikzset{cross/.style={cross out, draw=black, minimum size=2*(#1-\pgflinewidth), inner sep=0pt, outer sep=0pt},
	%default radius will be 1pt. 
	cross/.default={1pt},
	position label/.style={
		below = 3pt,
		text height = 1.5ex,
		text depth = 1ex
	},
	brace/.style={
		decoration={brace, mirror},
		decorate
	}
}
\newcommand{\nomenclheader}[1]{%
	\item[\hspace*{-\itemindent}\normalfont\bfseries#1]}
\renewcommand\nomgroup[1]{%
	\IfStrEqCase{#1}{%
		{M}{\nomenclheader{Mathematical symbols}}
		{G}{\nomenclheader{Greek symbols}}%   V - Variables
		{R}{\nomenclheader{Roman symbols}}% P - Parameters
		{O}{\nomenclheader{Other symbols}}% A - Acronyms
	}%
}
\newcommand{\R}{\mathbb{R}}
\newcommand{\J}{\mathcal{J}}
\newcommand{\W}{\mathcal{W}}
\newcommand{\LBA}{\mathfrak{L}} %Banachalgebra L
\newcommand{\Co}{\mathcal{C}^0}
\newcommand{\C}{\mathcal{C}}
\newcommand{\Parttwo}[2]{\dfrac{\partial #1}{\partial #2}}
\newcommand{\str}{\sp{\prime}}
\newcommand{\esssup}{{\mathrm{ess}\sup}}
\let\div\relax
\DeclareMathOperator{\div}{div}
\newcolumntype{C}[1]{>{\centering\arraybackslash}p{#1}}
\DeclareMathOperator{\atan}{atan} %Diagonal matrices
\title{Existence, uniqueness and numerical modeling of wine fermentation based on 
	integro-differential equations\thanks{Preprint submitted to SIAM Journal on Applied Mathematics; Parts of this work have been part of the PhD thesis of the first author.}}
\author{C. Schenk\thanks{BCAM - Basque Center for Applied Mathematics, Mazarredo 14, E48009 Bilbao, Basque Country - Spain (\email{cschenk@bcamath.org}, corresponding author).}
\and V.H. Schulz\thanks{Trier University, Department of Mathematics,
	54286 Trier, Germany (\email{volker.schulz@uni-trier.de}).}}
\begin{document}
		\maketitle
\begin{keywords}
		Numerical Modeling, Existence and Uniqueness, Finite Volume Method, Weakly Hyperbolic PIDE, Population Balance Model, Wine Fermentation
\end{keywords}
	%\onehalfspacing
	\begin{abstract}Predictive modeling is the key factor for saving time and resources with respect to manufacturing processes such as fermentation processes arising e.g.\ in food and chemical manufacturing processes. According to \citet{zhang2002CellPop} the open-loop dynamics of yeast are highly dependent on the initial cell mass distribution. This can be modeled via population balance models describing the single-cell behavior of the yeast cell. There have already been several population balance models for wine fermentation in the literature. However, the new model introduced in this paper is much more detailed than the ones studied previously. This new model for the white wine fermentation process is based on a combination of components previously introduced in literature. It turns it into a system of highly nonlinear weakly hyperbolic partial/ordinary integro-differential equations. This model becomes very challenging from a theoretical and numerical point of view. Existence and uniqueness of solutions to a simplified version of the introduced problem is studied based on semigroup theory. For its numerical solution a numerical methodology based on a finite volume scheme combined with a time implicit scheme is derived. The impact of the initial cell mass distribution on the solution is studied and underlined with numerical results. The detailed model is compared to a simpler model based on ordinary differential equations. The observed differences for different initial distributions and the different models turn out to be smaller than expected. The outcomes of this paper are very interesting and useful for applied mathematicians, winemakers and process engineers.\end{abstract}
%	\begin{table}[htbp] 
%		\begin{framed}
%			\printnomenclature
%		\end{framed}
%	\caption{Nomenclature.}
%	\end{table}
%	\nomenclature[a]{FVM}{Finite Volume Method}
%	\nomenclature[a]{ODE}{Ordinary Differential Equations}%\--label{foot:1KIPET}}
%	\nomenclature[a]{IDE}{Integro-differential Equations}
%	\nomenclature[a]{PBE}{Population Balance Equation}
%	\nomenclature[a]{ROENOBIO}{Robust Energy-Optimization of Fermentation Processes for the Production of Biogas and Wine}
		\section{Introduction}
	Many processes arising in diverse contexts can be classified as fermentation process\-es, such as manufacturing of some food products and of some industrial and pharmaceutical chemicals. An overview of fermentation products can be found in \citet{Chojnacka2011FermentationP}. This work deals with the process of white wine fermentation.\\
	In the literature many models based on ordinary differential equations (ODEs) exist to model the process of wine fermentation e.g. \citet{david10, david11,velten2015modellsimulation}. \citet{MILLER2020109783} recently published a review looking at several different ways of modeling the process of wine fermentation also including heat transfer and convective mixing but mostly focused on red wine fermentation.
	However, \citet{zhang2002CellPop} claimed that the open-loop dynamics of yeast are highly dependent on the initial cell mass distribution. This is why, we introduce a new model consisting of a combination of different components from the literature. This model describes the wine fermentation process taking the yeast cell growth dynamics into account. It is based on partial/ordinary integro-differential equations.\\ 
	Models based on integro-differential equations (IDEs) describing the development of the yeast population taking the single cell into account had already been proposed e.g. by \citet{dynamichens, dynamicdaouthens, dynamicmantzaris}.
	This research work arose from the collaborative project R\OE NOBIO (2013 -- 2017, Robust Energy-Optimization of Fermentation Processes for the Production of Biogas and Wine), where a new model modeling the reaction kinetics had already been proposed \citep{NovMod14, schenk17}. This model is based on Michaelis--Menten kinetics \citep{MichaelisMenten1913} and ODEs. Compared to previous models in the literature, this model takes oxygen and ethanol-related death of yeast into account.\\ 
	The IDE model investigated in this study combines these two modeling approaches into one. It is based on the earlier population balance models and describes the rates related to the reaction kinetics and death of yeast cells as in the new ODE model \citep{schenk17}. Thereby, a partial IDE describes the population balance and ordinary IDEs describe the evolution of the other substrate concentrations and the product concentration.\\
	This new model describing the yeast cell dynamics based on IDEs proposes a challenge from the theoretical and numerical side. That is why, we investigate it in this sense.\\
	The study of existence and uniqueness of a solution of this hyperbolic system is strongly dependent on the characteristics of the advection term and the reaction rates. Here a simplified semilinear hyperbolic problem is studied. For semilinear hyperbolic systems, many different approaches exist, e.g.\ \citet{pazy1992semigroups, engel1999one,qamar2008crystalana,wloka1982partielle}. This work shows that this particular problem admits a unique solution based on the concept of semigroups in a similar fashion as described in \citet{dautray6}. \\
	In the last decades, mass-structured cell population balance models have been solved numerically using various solution approaches ranging from the method of characteristics to finite difference to Galerkin to finite volume schemes. A finite volume method (FVM) was first used for the solution of a nonlinear aggregation-breakage population balance equation by \citet{Kumar2014pbeaggbreak}.
	However, this paper goes one step further as it introduces a finite volume scheme for a nonlinear hyperbolic system based on a population balance equation and several ordinary integro-differential equations, using an upwind scheme for the discretization of the nonlinear advection term. According to the initial motivation to model the single-cell growth dynamics, different initial distributions % and time discretization schemes
	are studied and compared. The simulation results for this model based on IDEs are compared with the much simpler model based on ODEs \citep{schenk17, schenk2018docthesis}.
	More details of the current study can be found in \citet{schenk2018docthesis}.\\
	In Section~\ref{sec:matmeth} the main model describing the white wine fermentation process is introduced. Furthermore, the numerical methods for the simulation of this system are introduced. Moreover, the outcomes of theoretical investigations regarding existence and uniqueness of the solution to a simplified problem are presented. In Section~\ref{sect:resultsdiscuss} the introduced methods are applied to the white wine fermentation model. The numerical results are discussed for different initial yeast distributions. Conclusions are presented in Section~\ref{sect:concl}. 
	\section{Model, Methods and Theory}\label{sec:matmeth}
	\subsection{Model Derivation}\label{sec:IDEmod}
	%\section{Model derivation}
	The central component of this paper is the model introduced in this section. This model was derived based on the population balance models, introduced in \citet{dynamicdaouthens, dynamichens, dynamicmantzaris, kremling} and the ODE model introduced in \citet{schenk17}. This model was introduced for the first time in \citet{schenk2018docthesis} and represents the yeast cell growth dynamics during wine fermentation taking the single cell into account. The deterministic concept is based on Figure~\ref{fig:cellcycle}.
	\begin{figure}[htbp]
		\centering
		\includegraphics[scale=0.4]{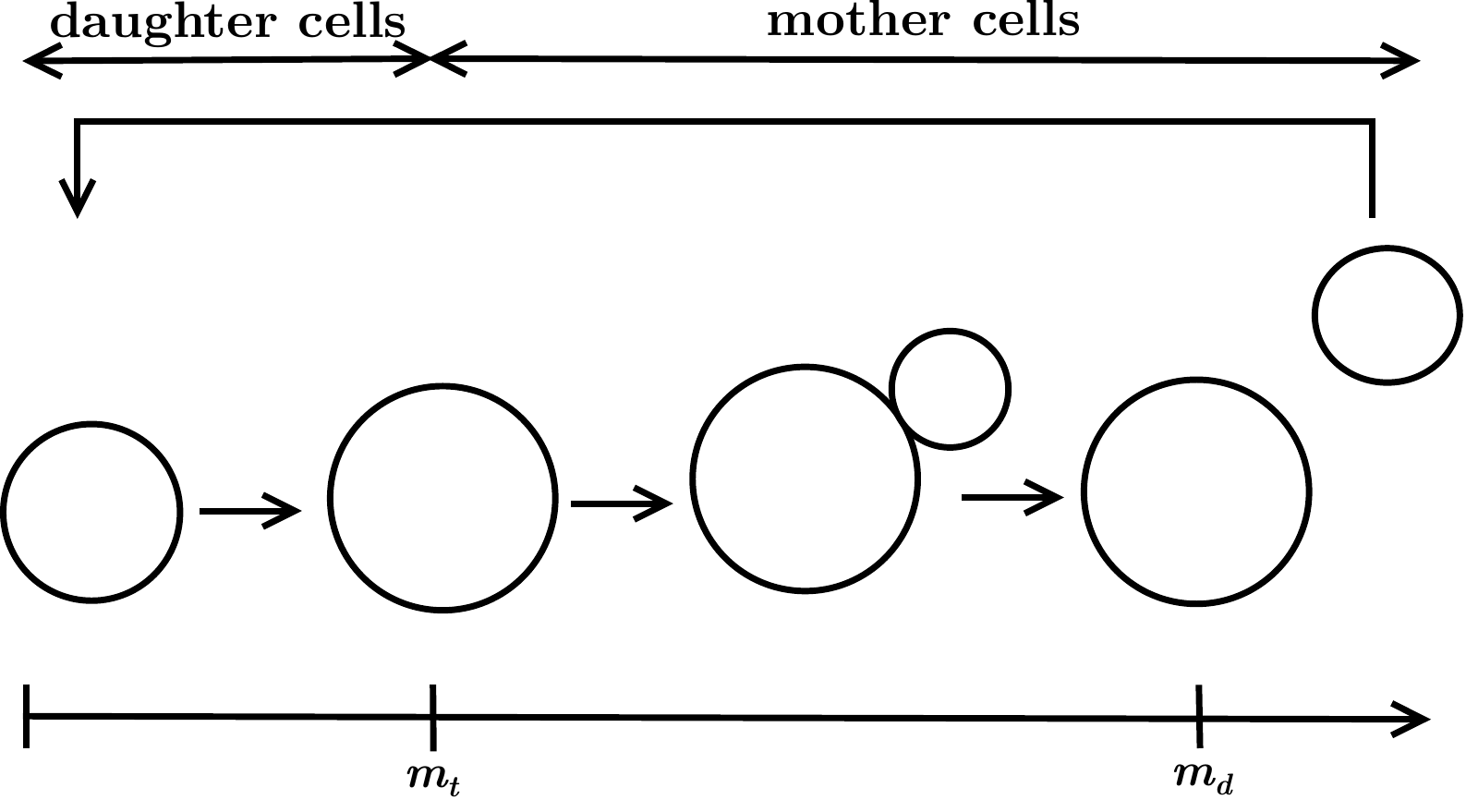}
		\caption{Simplified cell cycle for budding yeast.}
		\label{fig:cellcycle}
	\end{figure}This figure shows the simplified cell cycle for budding yeast. A cell starts as a daughter cell and grows until it is of mass $m_t$ (transient mass) where it becomes a mother cell. It starts budding and when it reaches a mass of $m_d$ (division mass) cell division takes place and it divides into a daughter and mother cell again.
	According to \citet[Chapter~2.2]{morgan2007cell} a yeast cell cycle like this takes about 90 to 120 minutes.\\ 
	The population balance equation (PBE), which models the development of the cell number density depending on its cell mass, is expressed by
\begin{equation}\label{eq:PBE}\begin{split}
	\dfrac{\partial W(m,t)}{\partial t}=&-\frac{\partial(r_{\epsilon}(m,N,S,O)W(m,t))}{\partial m}+2\int_{m_{min}}^{m_{max}}p(m,m\str)\Gamma(m\str)W(m\str,t)dm\str\\&-\Gamma(m)W(m,t)-\Phi(E)W(m,t)-k_d W(m,t)
	\end{split}
\end{equation}
	with the following initial condition 
	\begin{equation}
	W(m,0)=W_0(m)
	\label{eq:PBEIC}
	\end{equation} 
	and boundary conditions
	\begin{equation}
	\begin{split}
	&r_{\epsilon}(m_{min},N,S,O)W(m_{min},t)=0=r_{\epsilon}(m_{max},N,S,O)W(m_{max},t).
	\end{split}
	\label{eq:PBEBC}
	\end{equation}
	These boundary conditions imply that for cells of minimum or maximum mass, i.e. $m_{min}$ or $m_{max}$ respectively, growth is impossible.
	In this model, $m$ is the cell mass and $W(m,t)$ is the cell number density.
	Furthermore, $S, N, O$ and $E$ are the sugar, nitrogen, oxygen and ethanol concentration. The function $p(m,m^{\prime})$ is the partitioning function which allocates the probability of a mother cell $m^{\prime}$ giving birth to a daughter cell $m$. In addition to this, $\Gamma(m)$ is the division rate or in other words the breakage frequency. The single cell growth rate $r_{\epsilon}(m,N,S,O)$ is represented by
	\begin{equation}
	r_{\epsilon}(m,N,S,O)=\mu_{max}(T)\dfrac{N}{K_N+N}\dfrac{S}{K_{S_1}+S}\left(\dfrac{O}{K_{O}+O}+\epsilon\right)m.
	\end{equation}
	This rate is dependent on the constant $\epsilon>0$, as first introduced in \citet{schenk17}. This constant serves the purpose of guaranteeing that other nutrients can still be consumed by the yeast to stay active even without the presence of oxygen.\\
	For modeling the ethanol-related death in detail, the function $\Phi(E)$ with 
	\begin{equation}
	\Phi(E)=\left(0.5 +\dfrac{1}{\pi}\arctan(k_{d_1}(E-tol))\right)k_{d_2}(E-tol)^2
	\label{eq:phi1}
	\end{equation}
	is considered, where $tol>0$ is the tolerance of the ethanol concentration, e.g.\ $tol=79$ g/l, which was determined by a set of data produced at Geisenheim University. Furthermore, $k_{d_1}>0$ and $k_{d_2}>0$ are parameters associated with the death of yeast cells due to ethanol exceeding the tolerance $tol$. This death function is illustrated in Figure~\ref{Fig:PhiE}.
	\begin{figure}[htbp]
		\centering
		\includegraphics[scale=0.4]{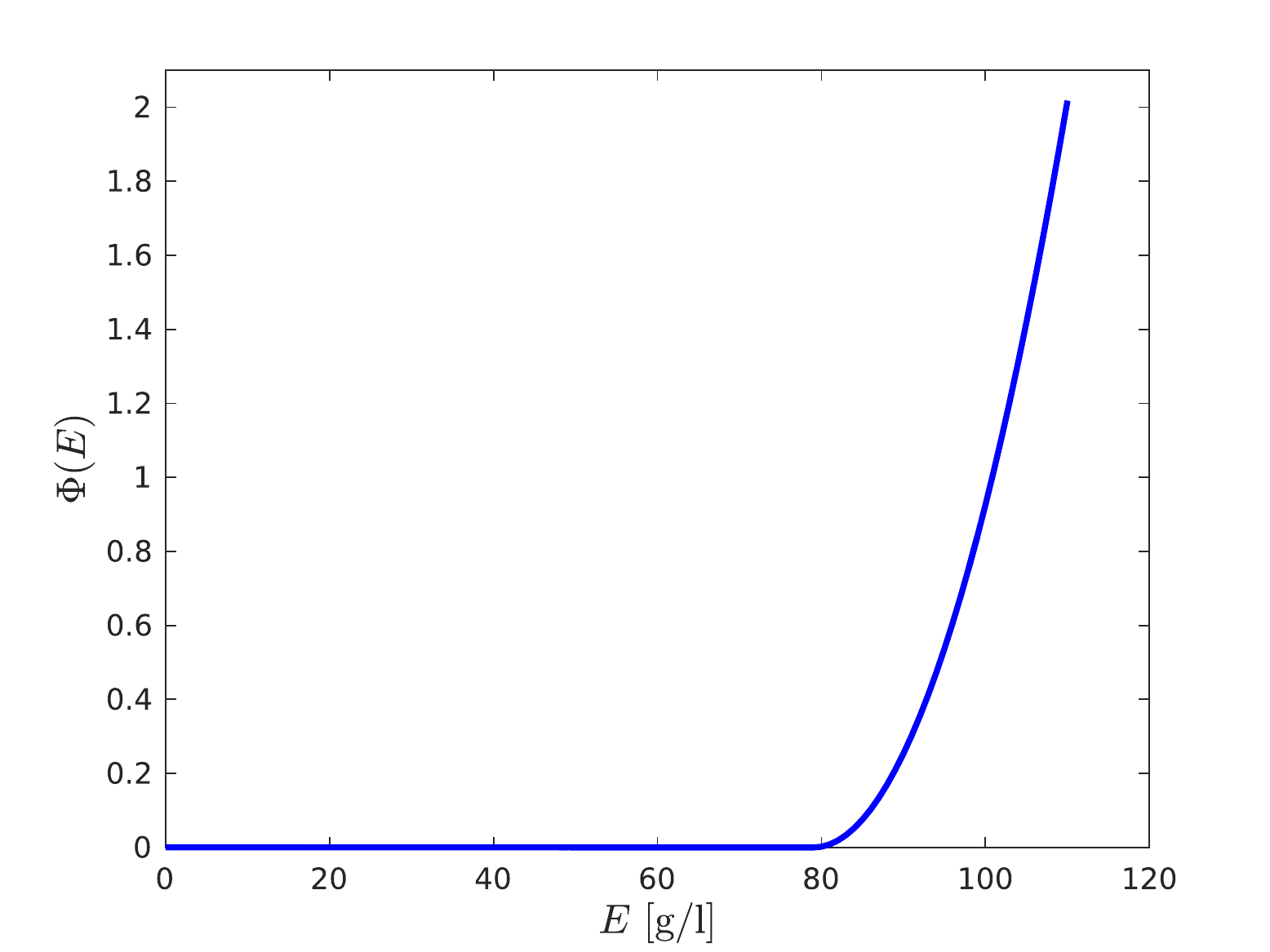}
		\caption{Ethanol-related death function $\Phi(E)$ for $tol=79$ g/l, $k_{d_1}=99.86$, $k_{d_2}=0.0021$ and $E\in[0,110$ g/l$]$.}
		\label{Fig:PhiE}
	\end{figure}
	In detail the partitioning function is represented by
	\begin{equation}
	p(m,m\sp{\prime})=\begin{cases}\begin{aligned}
	&\lambda e^{-\beta(m-m_t)^2}+\lambda e^{-\beta(m-m\sp{\prime}+m_t)^2}&&,\;m\sp{\prime}>m \text{ and } m\sp{\prime}>m_t \\& 0&&,\;\text{else}\end{aligned}\end{cases}
	\end{equation}
	and the division rate is given by
	\begin{equation}
	\begin{split}
	&\Gamma(m)=\begin{cases} \begin{aligned} & 0 &&,\;m\leq m_t\\&\gamma e^{-\delta (m-m_d)^2} &&,\;m_t<m<m_d \\ &\gamma &&,\;\text{else},\end{aligned}\end{cases}\\
	\end{split}
	\end{equation}
	where $m_t$ and $m_d$ are the cell transition and division mass. Moreover, $\lambda$, $\beta$, $\gamma$ and $\delta$ are the deterministic parameters.\\
	The partitioning function $p(m,m\str)$ according to \citet{dynamicmantzaris} should fulfill a normalization condition, i.e.\ 
\begin{equation}\int_{m_{min}}^{m_{max}} p(m,m\str)\;dm=1,\end{equation}
	which assures that it is a density function. Furthermore, biomass should be conserved at cell division according to \citet{dynamicmantzaris}. This means that the following condition should be satisfied for $p(m,m\str)$, namely
 \begin{equation}p(m,m\str)=p(m\str-m,m\str).\end{equation}
	For example the partitioning function and division rate for the values $m_t=0.3784$, $m_d=0.8525$, $\gamma=200$, $\delta=50$, $\lambda=5.6419$, $\beta=400$, fixed $m\sp{\prime}=0.999$ and $m\in[0.001,0.999]$ are illustrated in Figure~\ref{Fig:plotpartgamma}.
	\begin{figure}[htbp]
		\centering
		\vspace{-.5cm}
		\includegraphics[scale=0.5]
		{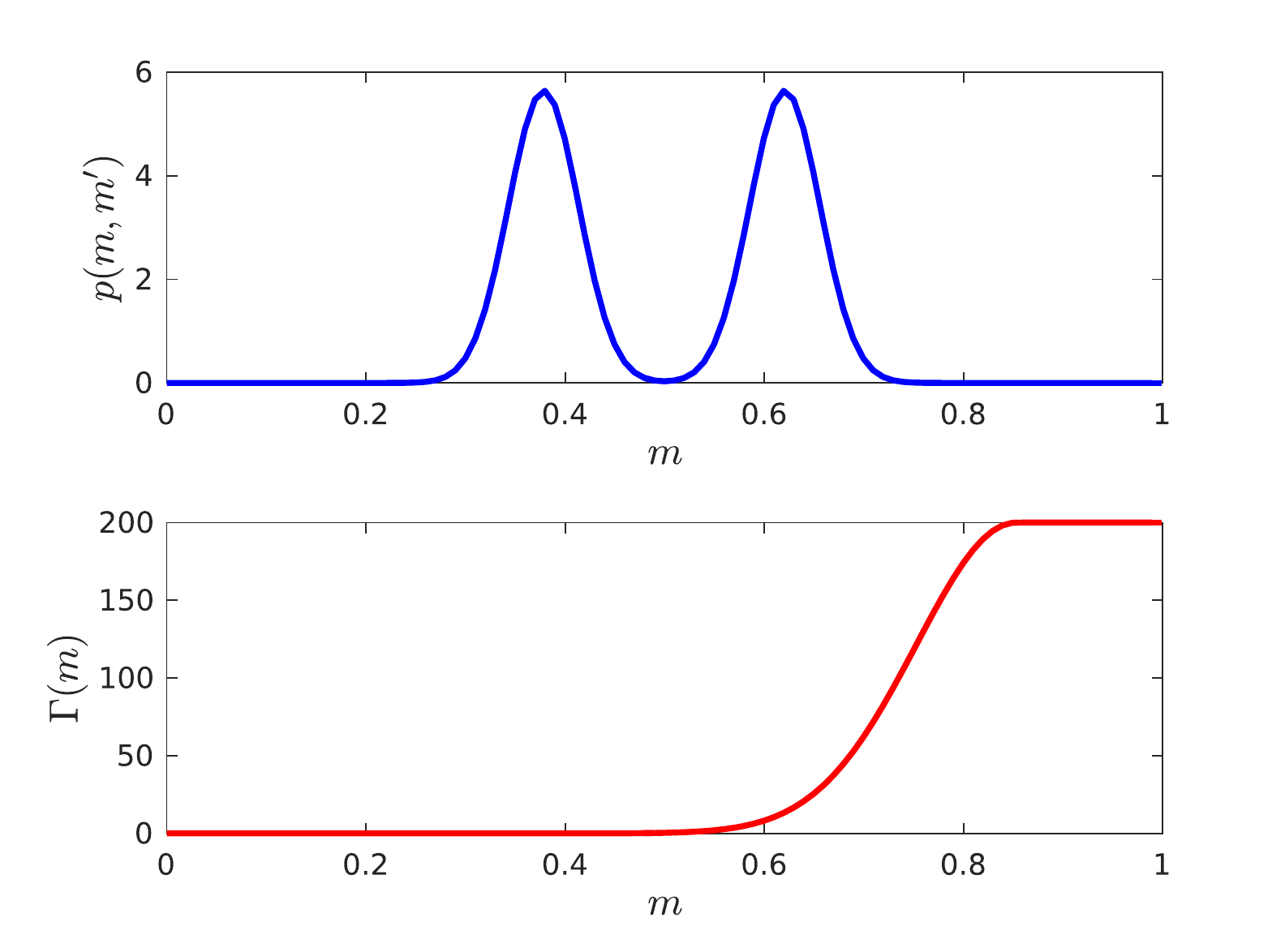}%{Results/Beispiel/plotpartgamma.png}
		\caption{Partitioning function $p(m,m\str)$ and breakage frequency $\Gamma(m)$ for $m_t=0.3784$, $m_d=0.8525$, $\gamma=200$, $\delta=50$, $\lambda=5.6419$, $\beta=400$, fixed $m\sp{\prime}=0.999$ and $m\in[0.001,0.999]$.}
		\label{Fig:plotpartgamma}
	\end{figure}
	The upper graphic in Figure~\ref{Fig:plotpartgamma} shows two peaks. These peaks represent cell division in mother and daughter cell where one of the peaks is centered at $m_t$ and the other peak is centered at $m\str-m_t$. Moreover, the lower graphic in Figure~\ref{Fig:plotpartgamma} shows that beginning from the value of $m_t$ the probability of cell division rises.\\
	The six terms in eq. \eqref{eq:PBE} can be interpreted in the following way. The first term describes the accumulation of cells in time and the second term describes the loss of cells of mass $m$ because they grow into larger cells. Moreover, the third term expresses the birth of cells of mass $m$ resulting from the division of larger cells. Furthermore, the fourth term stands for the loss of cells of mass $m$ due to cell division resulting in the birth of smaller cells. The fifth and sixth term represent the loss of cells due to death, related to a high ethanol concentration and due to other circumstances.\\
	To describe the process of white wine fermentation, in addition to this population balance equation \eqref{eq:PBE}, differential equations for the other substrates are needed. 
	The reaction rates used in the following come from our ODE model introduced in \citet{schenk17}.\\
	The consumption of the nutrient nitrogen is described by the following equation
	\begin{equation}
	\begin{split}
	&\dfrac{dN}{dt}=-k_1\int_{m_{min}}^{m_{max}}r_{\epsilon}(m,N,S,O)W(m,t)dm%,\\ &\hspace{0.5cm}N(0)=N_0
	\end{split}
	\end{equation} 
	with the initial condition 
	\begin{equation}
	N(0)=N_0
	\end{equation}
	and, analogously, the consumption of the nutrient oxygen is represented by
	\begin{equation}
	\begin{split}
	&\dfrac{dO}{dt}=-k_4\int_{m_{min}}^{m_{max}}r(m,N,S,O)W(m,t)dm%,\\ &\hspace{0.5cm}N(0)=N_0
	\end{split}
	\end{equation}
	with the initial condition
	\begin{equation}
	O(0)=O_{0}.
	\end{equation}
	For the differential equation of oxygen the growth rate $r$ is not dependent on $\epsilon$, such that
	\begin{equation}
	r(m,N,S,O)=\mu_{max}(T)\dfrac{N}{K_N+N}\dfrac{S}{K_{S_1}+S}\dfrac{O}{K_{O}+O}m.
	\end{equation}
	Thereby, $k_1>0$ and $k_4>0$ are the yield coefficient for nitrogen or respectively oxygen.\\
	Moreover, the sugar consumption due to the conversion into alcohol and the consumption for yeast activity described by
	\begin{equation}
	\begin{split}
	&\dfrac{dS}{dt}=-\int_{m_{min}}^{m_{max}}q(m,N,S,E,O)W(m,t)dm%,\\ &\hspace{0.5cm}S(0)=S_0
	\end{split}
	\end{equation}
	with the initial condition
	\begin{equation}
	S(0)=S_0.
	\end{equation}  
	The consumption rate $q(m,N,S,E,O)$ is represented by
	\begin{equation}
	q(m,N,S,E,O)=k_2\; q_E(m,S,E)+k_3\;r_{\epsilon}(m,N,S,O).
	\end{equation}
	$k_2>0$ and $k_3>0$ are the yield coefficients associated with the part of sugar converted into alcohol and the part of sugar consumed as a nutrient for the yeast, respectively.
	The accumulation of ethanol is modeled by
	\begin{equation}
	\begin{split}
	&\dfrac{dE}{dt}=\int_{m_{min}}^{m_{max}}q_E(m,S,E)W(m,t)dm%,\\ &\hspace{0.5cm}E(0)=0
	\end{split}
	\end{equation}
	with the initial condition
	\begin{equation}
	E(0)=0.
	\end{equation} 
	The ethanol accumulation rate $q_E(m,S,E)$ is expressed by
	\begin{equation}
	q_E(m,S,E)=\beta_{max}(T)\dfrac{S}{K_S+S}\dfrac{K_E(T)}{K_E(T)+E}m.
	\end{equation} 
	Thereby, $\mu_{max}$ and $\beta_{max}$ are reaction rates, $K_E$ describes the growth inhibition by ethanol and $K_N$, $K_{O}$, $K_{S_1}$, $K_{S_2}>0$ denote the Michaelis constants for nitrogen, oxygen and sugar.\\Besides, $\mu_{max}$, $\beta_{max}$ and $K_E$ are assumed to be linearly dependent on the temperature $T$, i.e.\ $\mu_{max}(T)=\mu_1 T-\mu_2$ with $\mu_1$, $\mu_2\ge0$ and $\beta_{max}(T)=\beta_1 T-\beta_2$ with $\beta_1$, $\beta_2\ge0$ and $K_E(T)=-K_{E_1} T+K_{E_2}$ with $K_{E_1}$, $K_{E_2}\ge 0$ and with $T$ according to a temperature profile used in practice, a constant temperature for the first half of the fermentation process, a linear increase to a second constant higher temperature that is kept for the second half of the fermentation process. All these rates should be non-negative.
	%%%%%%%%%%%%%%%%%%%%%%%%%%%%%%%%%%%%%%%%%%%%%%%%%%%%%%%%%%
	\subsection{Numerical Scheme}
	\subsubsection{Spatial Discretization: Discretization of PIDE in Mass}\label{Sec:FVM}
	FVMs are popular for the discretization of hyperbolic governing equations. The method makes immediate use of the conservation laws like the law for the conservation of mass. The equations are discretized by division of the space into a finite number of control volumes.
	More information related to the finite volume method in general can be found e.g.\ in \citet{blazekCFD, munzNum, wesseling}.\\ In the following, the finite volume scheme for the discretization in mass of the system of integro-differential equations, introduced in Section~\ref{sec:IDEmod} is derived. One major advantage of this method is that by using it the conservation of mass is guaranteed. Here the position definition of the control volume is described by a cell-centered scheme.
	The integration of the population balance equation \eqref{eq:PBE} over the control volume $\Omega_i=[m_i, m_{i+1}]$ results in  
	\begin{equation}
	\begin{split}
	\int_{\Omega_i}\dfrac{\partial W(m,t)}{\partial t}dm + \int_{\Omega_i}\dfrac{\partial (r_{\epsilon}(m,N,S,O)W(m,t))}{\partial m} dm=\int_{\Omega_i} g_s dm ,\quad\text{where}
	\end{split}
	\label{eq:fveq1}
	\end{equation}
	\begin{equation}
	g_s=2\int_\Omega k(m,m\sp{\prime})W(m\sp{\prime},t)dm\sp{\prime}
	-\Gamma(m)W(m,t)-\Phi(E)W(m,t)-k_dW(m,t)\quad \text{	with}
	\label{eq:fveq2}
	\end{equation}
	\begin{equation}
	k(m,m\sp{\prime})=p(m,m\sp{\prime})\Gamma(m\sp{\prime}).
	\end{equation}
	The cell number density $W(m,t)$ is chosen to be piecewise constant on $\Omega_i$, such that $W(m,t)=w_i(t)$, $\forall m\in[m_i,m_{i+1}]$. An illustration of these piecewise constant approximations can be found in Figure~\ref{fig:fvm}.	\begin{figure}[htbp]
		\centering
			\begin{tikzpicture} 
			% horizontal axis
			\draw[->,thick] (-1,0) -- (8,0);
			\draw[thick] (1.5,-2pt)--(1.5,2pt);
			\draw[thick] (3,-2pt)--(3,2pt);
			\draw[thick] (4.5,-2pt)--(4.5,2pt);
			%\draw[very thick] (6,-2pt)--(6,2pt);
			\draw[thick] (7.5,-2pt)--(7.5,2pt);
			%Beschriftung unter x-Achse
			\draw (1.5,-0.6) node{$m_1$};
			\node [position label] (OmegaStart) at (0.0,0.0) {};
			\draw (3,-0.6) node{$m_2$};
			\draw (4.5,-0.6) node{$m_3$};
			\draw (6.,-0.6) node{$\ldots$};
			%\draw (7.5,-0.8) node{$m_{max}$};
			\node [position label] (OmegaEnd) at (7.5,0.0) {}; \draw (7.5,-0.6) node{$m_{max}$};
			\draw[-,thick] (0.1,0.5) -- (1.4,0.5);
			\draw (0.75,0.7) node{$w_0$};
			\draw (0.75,-0.4) node{$\Omega_0$};
			\draw[-,thick] (1.6,1.) -- (2.9,1.);
			\draw (2.25,1.2) node{$w_1$};
			\draw (2.25,-0.4) node{$\Omega_1$};
			\draw[-,thick] (3.1,0.3) -- (4.4,0.3);
			\draw (3.75,0.5) node{$w_2$};
			\draw (3.75,-0.4) node{$\Omega_2$};
			%\draw[<->,very thick] (0,-1) -- (7.5,-1);
			\draw [thick,brace, decoration={raise=0.3ex}] (OmegaStart.south) -- node [position label,pos=0.5] {$\Omega$} (OmegaEnd.south);
			\draw[->,thick] (0,-1.75) -- (0,2);			
			\end{tikzpicture}\nolinebreak
		\caption{Illustration of piecewise constant approximations for control volumes for FVM.}\label{fig:fvm}
	\end{figure}
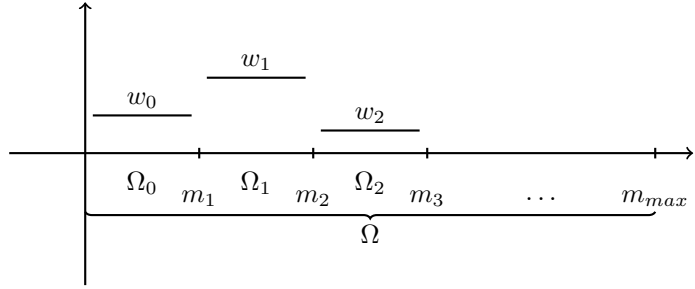The first term in eq.~\eqref{eq:fveq1} becomes	\begin{equation}
	\int_{\Omega_i}\dfrac{\partial W(m,t)}{\partial t}dm=\dot{w_i}(m_{i+1}-m_i).
	\end{equation}
	For the second term, the first order upwind scheme taking the velocity at the right hand side boundary of the cell is used for the approximation of the flux. 
	This yields
	\begin{equation}
	\begin{split}
	&\int_{\Omega_i}\dfrac{\partial (r_{\epsilon}(m,N,S,O)W(m,t))}{\partial m} dm\\&= r_{\epsilon}(m_{i+1},N,S,O)W(m_{i+1},t)-r_{\epsilon}(m_i,N,S,O)W(m_i,t)\\&\approx r_{\epsilon}(m_{i+1},N,S,O)w_{i}(t)-r_{\epsilon}(m_{i},N,S,O)w_{i-1}(t),
	\end{split}
	\end{equation}
	where the velocity or in other words the growth rate $r_{\epsilon}$ is affine linear in $m$. 
	The first part of the source term \eqref{eq:fveq2} yields
	\begin{equation}
	\begin{split}
	&2\int_{\Omega_i}\int_\Omega k(m,m\sp{\prime})W(m\sp{\prime},t)dm\sp{\prime}dm\\&=
	2\sum_{j=0}^{N_W}\int_{\Omega_i}\int_{\Omega_j}
	k(m,m\sp{\prime})W(m\sp{\prime},t)dm\sp{\prime}dm\\&=
	2\sum_{j=0}^{N_W}w_j(t)\underbrace{\int_{\Omega_i}\int_{\Omega_j}
		k(m,m\sp{\prime})dm\sp{\prime}dm}_{=:K_{ij}}\\
	&=2\sum_{j=0}^{N_W}w_j(t)K_{ij}
	\end{split}
	\end{equation}
	and the second part of the source term \eqref{eq:fveq2} becomes 
	\begin{equation}
	\int_{\Omega_i}\Gamma(m)W(m,t)dm=
	w_i(t)\int_{\Omega_i}\Gamma(m)dm
	\end{equation}
	and both death terms can be written in the following way as 
	\begin{equation}
	\int_{\Omega_i}k_dW(m,t)dm=
	w_i(t)(m_{i+1}-m_i)\quad\text{and}
	\end{equation} 
	\begin{equation}
	\int_{\Omega_i}\Phi(E)W(m,t)dm=
	w_i(t)(m_{i+1}-m_i).
	\end{equation}
	All in all, equation~\eqref{eq:PBE} in discretized form looks like the following
	\begin{equation}
	\begin{split}
	\dot{w_i}=&\dfrac{1}{m_{i+1}-m_i}\bigg[-( r_{\epsilon}(m_{i+1},N,S,O)w_{i}(t)-r_{\epsilon}(m_{i},N,S,O)w_{i-1}(t))\\&+ 2\sum_{j=0}^{{N_W}}\left(w_j(t)\int_{\Omega_{i}}\int_{\Omega_{j}}p(m,m^{\prime})\Gamma(m^{\prime})dm^{\prime}dm\right)-w_i(t)\int_{\Omega_i}\Gamma(m)\;dm\bigg]\\& -\Phi(E)w_i(t)-k_dw_i(t),\quad i=1,2,\ldots,N_W-1,
	\end{split}
	\label{eq:fvfinal}
	\end{equation}
	where $N_W$ is the number of cells used in the finite volume scheme.
	Additionally, we have the following boundary conditions 
	\begin{equation}\begin{aligned}
	&\frac{1}{\Delta m}r_{\epsilon}(m_{0},N,S,O)w_0(t)=0=\frac{1}{\Delta m}r_{\epsilon}(m_{N_W},N,S,O)w_{N_W}(t).
	\end{aligned}
	\label{eq:fvbound}
	\end{equation} 
	To describe the process of wine fermentation, the whole system of equations introduced in Section~\ref{sec:IDEmod}, needs to be put in discretized form. Thus, the ordinary integro-differential equations for the product and substrates concentration development have to be discretized and added to \eqref{eq:fvfinal} and \eqref{eq:fvbound}. The application of the finite volume discretization for these equations yields the following.\\ 
	For the sugar consumption, it follows
	\begin{equation}
	\frac{dS}{dt}=-\sum_{i=1}^{{N_W}-1} \tilde{q}(N,S,E,O)\left(\dfrac{m_{i+1}+m_i}{2}\right)w_i(m_{i+1}-m_i),
	\end{equation}
	where $q=\tilde{q}(N,S,E,O)m$.\\
	The accumulation of ethanol results in
	\begin{equation}
	\frac{dE}{dt}=\sum_{i=1}^{{{N_W}-1}} \tilde{q}_E(S,E)\left(\dfrac{m_{i+1}+m_i}{2}\right)w_i(m_{i+1}-m_i),
	\end{equation}
	where $q_E=\tilde{q_E}(S,E)m$.\\
	For the nutrients, this results in
	\begin{equation}
	\dfrac{dN}{dt}=-k_1\sum_{i=1}^{{{N_W}-1}}\tilde{r}_{\epsilon}(N,S,O)\left(\dfrac{m_{i+1}+m_i}{2}\right)w_i(m_{i+1}-m_i)
	\end{equation} 
	for the nitrogen consumption and in 
	\begin{equation}
	\dfrac{dO}{dt}=-k_4\sum_{i=1}^{{{N_W}-1}}\tilde{r}(N,S,O)\left(\dfrac{m_{i+1}+m_i}{2}\right)w_i(m_{i+1}-m_i)
	\end{equation}

	for the oxygen consumption, where as above $r_{\epsilon}(m,N,S,O)=\tilde{r_{\epsilon}}(N,S,O)m$ and \\${r(m,N,S,O) = \tilde{r}(N,S,O)m}$. 
	This yields the following system of differential equations 	
	\begin{equation}
	\dot{y}=f(t,y(t)), \text{ where } y=(w_i, N, E, S, O)^T\; \text{ for }i=0,\ldots,N_W.
	\label{eq:fullsys}
	\end{equation}
	\subsubsection{Temporal Discretization}\label{sec:tempdiscret}
	To receive a numerical solution to eq.~\eqref{eq:fullsys}, it still needs to be discretized in time. There are a lot of different methods available for solving such a system and there are several advantages and disadvantages for the use of certain methods. 
	In the following, the implicit trapezoidal rule will be used. 
	The time steps are chosen with respect to the Courant-Friedrichs-Lewy condition for explicit methods as e.g.\ in \citet{blazekCFD,wesseling}. The CFL condition is an informative criterion related to the relationship in between the product of the velocity and the time step to here the mass interval length.\\
	The implicit trapezoidal rule is a second order method and A-stable. Discretization with these schemes yields a system of nonlinear equations. This can be solved using Newton's method.
	\begin{remark}\label{rem:cnNLP}
		The implicit trapezoidal rule for PIDE as in Section~\ref{sec:IDEmod} which has to be discretized regarding space and time results in
	 \begin{equation}g(y_i^{n+1})=y_i^{n+1}-y_i^{n}-\frac{h}{2}( f(t_i^{n+1},y_i^{n+1})+f(t_i^n,y_i^n))=0\end{equation} 	 with $n$ denoting the time iterate and $i$ the space iterate. Newton's method applied to this, results in
		\begin{align*}
		&y_i^{n+1}=y_i^n-{\J_g^{-1}(y_i^n)}g(y_i^n)\\&=y_i^n-\left(I-\frac{h}{2\Delta m}{\J_f(y_i^n)}\right)^{-1} (y_i^n-y_i^{n-1}-\frac{h}{2\Delta m}(f(y_i^n)+ f(y_i^{n-1}))).
		\end{align*}
	\end{remark}
	More details for the methods used here and related stability and convergence results can be found e.g.\ in \citet{hairer2010solvingII, deuflhard2008gewoehnliche, stoer2006numerische, plato2006numerische}.
	\subsection{Existence and Uniqueness of the Solution}
	% % % % % % % % % % % % % % % % % % % % % % % % % % % %
	% % % % % % % % % % % % % % % % % % % % % % % % % % % % % 
	% % % % % % % % % % % % % % % % % % % % % % % % % % % % % %
	In the following, existence and uniqueness of the solution of a simplified version of the IDE model, introduced in Section~\ref{sec:IDEmod}, is studied. For this simplified case, the growth rate $r_{\epsilon}$ is given% is just affine linearly dependent on m
	, such that we mainly consider the population balance equation \eqref{eq:PBE} with its initial and boundary conditions \eqref{eq:PBEIC} and \eqref{eq:PBEBC} but a different $r_{\epsilon}$ in that case. Let us first reformulate the equation introduced in \eqref{eq:PBE}. 
	% % % % % % % % % % % % % % % % % % % % % % % % % %
	% % % % % % % % % % % % % % % % % % % % % % % % % %
	% % % % % % % % % % % % % % % % % % % % % % % % % % % 
	In this paper, results for a simplified case with constant velocity and constant substrate concentrations are presented.
	In detail, here the other substrates like sugar, nitrogen, oxygen and ethanol are assumed to be constant, such that $r_{\epsilon}$ is given by ${\bar{r}}$. Therefore, in the following ${\bar{r}}$ refers to $r(m,N,S,O)$ and $r_{\epsilon}(m,N,S,O)$ but with constant substrate concentrations and not dependent on $m$. Then with the classification concept for first order PDEs \citep{prasad1985partial, hellwig1977, smoller1994}, %Chapter 2 in second source
	the equation can be classified as a semilinear hyperbolic partial integro-differential equation. Investigations for the more complex semilinear case with linear velocity in $m$ and the quasilinear case are currently in progress.\\
	This equation studied here is weakly hyperbolic because its characteristic polynomial has exactly one real distinct eigenvalue which is represented by $\bar{r}$.\\
	What follows in this section is mainly based on \citet{dautray5, dautray6}. In general, an approach based on semigroup theory is used.\\
	With the velocity term $\bar{r}$ given as a constant, the considered problem in this section is represented by
	\begin{subequations}\label{eq:probexun1}
	\begin{align}%[left = \empheqlbrace\,]
		&\Parttwo{W(m,t)}{t}+\bar{r}\cdot \div(W(m,t))+\Sigma(m) W(m,t)=K W(m,t)\label{eq:probexun1a},\\&\hspace*{2em}\; m\in M, \;\bar{r}\in\R^+, \;t>0\notag\\
		& W\big|_{\Theta}=0,\; \Theta=\{m_{min},m_{max}\}\label{eq:probexun1b}%\Theta=\{m\in\partial M: r>0\} %\Theta=\{m\in M:m=m_{min}\cup m=m_{max}\}
		\\& W(m,0)=W_0\text{ on }M,\; W_0\text{ given.}\label{eq:probexun1c}
		\end{align}
	\end{subequations}
	As $E$ is constant $\Phi(E)$ is constant as well, such that we can rewrite the death terms for this case as:
	\begin{equation}
	\bar{k}_d:=\Phi(E)+k_d
	\end{equation}
	$\Sigma$ is a positive function of $m$ with 	 
	\begin{equation}
	\Sigma(m)=\Gamma(m)+\bar{k}_d %-\tilde{r
	\end{equation}	 
	and the given operator $K$ is given by	 
	\begin{equation}
	(KW)(m)=\int_{M} f(m,m\sp{\prime})W(m\sp{\prime},t)dm\sp{\prime}\quad\text{	with}
	\label{eq:operK}
	\end{equation}	  
	\begin{equation}
	f(m,m\sp{\prime})=p(m,m\sp{\prime})\Gamma(m\sp{\prime})
	\end{equation}	 
	a given positive function that is measurable with respect to $m$ and $m\sp{\prime}$. Moreover, $M$ is represented by $M:=(m_{min},m_{max})$. 
	In the following, $L^2(M)$ is always assumed to be a real Banach space with the norm $x\to\|x\|_{L^2(M)}$ and $\{G(t)\}_{t\ge0}$ a semigroup of class $\Co$ over $L^2(M)$. In general for $x\in L^2(M)$, the function $t\to G(t)x$ is not differentiable unless $x \in D(A)$, where $D(A)$ as in Definition~\ref{Def:setdiffvec}. Let $\LBA(L^2(M))$ be the vector space of continuous linear mappings of $L^2(M)$ into $L^2(M)$.\\
	Before we start with the existence and uniqueness investigations, let us first clarify some terms and definitions regarding semigroup theory according to \citet{dautray5}.  
	\begin{definition}{Semigroup of class $\Co$}\\
		Let $\{G(t)\}_{t\ge0}$ be a family of elements $G(t)\in\LBA(L^2(M))$ for $t\ge0$. This family forms a semigroup of class $\Co$ in $L^2(M)$ if it fulfills these conditions
		\begin{align}
		\begin{cases}
		\quad G(s+t)=G(s)G(t)\quad \forall s,t\ge 0 &(i) \quad\text{(algebraic property)}\\
		\quad G(0) = Id &(ii) \quad\text{(identity in $\LBA(L^2(M))$)}\\
		\quad \underset{t\to +0}{\lim}\|G(t)x-x\|_{L^2(M)}=0 \quad\forall x \in L^2(M) &(iii) \quad \text{(topological property)}.
		\end{cases}
		\label{eq:semigroupprop}
		\end{align}		 
		\label{Def:semigroup}
	\end{definition}
	\begin{definition}{Set of differentiable vectors}\\
		We call $D(A)$ the set of differentiable vectors in $L^2(M)$, i.e.\ the subset of elements $x\in L^2(M)$ such that the function $t\to G(t)x$ is differentiable for $t\ge0$. 
		Because of the algebraic property \eqref{eq:semigroupprop}(i), $D(A)$ is represented by 
		\begin{equation}
		D(A)=\{x\in L^2(M); \dfrac{G(h)x-x
		}{h}\text{ converges in } L^2(M)\text{ as }h\to+0\}.
		\label{eq:DA}
		\end{equation}	 
		From now on, let $A_h$ be an operator defined by	 
		\begin{equation}
		A_h:= \dfrac{G(h)-Id}{h}
		\end{equation}
		with $A_h\in\LBA(L^2(M))\quad \forall h>0$.
		\label{Def:setdiffvec}
	\end{definition}
	\begin{definition}{Infinitesimal generator of a semigroup}\\
		An operator $A$ defined as a linear mapping from $D(A)$ into $L^2(M)$, precisely as $$\underset{h\to+0}{\lim}A_hm=Am$$ with $D(A)$ as in 
		\eqref{eq:DA}, is called the infinitesimal generator of the semigroup $\{G(t)\}_{t\ge0}$.
	\end{definition}
	Let $A$ be the unbounded operator in $L^2(M)$ defined by 		 
	\begin{equation}
	\begin{cases}
	&(AW)(m)=-\bar{r}\cdot \div(W(m,t))\\
	& D(A)=\{W\in L^2(M);\; AW\in L^2(M), W\big|_{\Theta}=0\}. %,p\in[1,\infty).
	\end{cases}
	\label{eq:defADA}
	\end{equation}
	Then, $A$ is called advection operator.
	Problem~\eqref{eq:probexun1} is equivalent to 
	\begin{equation}
	\begin{cases}
	&\Parttwo{W}{t}=TW\\& W(0)=W_0 \quad\text{with}
	\end{cases}
	\label{eq:probexun1prime}
	\end{equation} 
	\begin{equation}
	T=A-\Sigma(m)+K
	\end{equation}		 
	where $K$ is an integral operator, defined by \eqref{eq:operK}, which is bounded in $L^2(M)$ under certain assumptions for the kernel $f$.
	In order to solve problem~\eqref{eq:probexun1prime}, we first have to determine the semigroup generated by the operator $T$. This semigroup should be a semigroup of class $\Co$ in $L^2(M)$.
	\begin{prop}		\label{Prop:Prop2}
		Let $A$ be the infinitesimal generator of a semigroup of class $\Co$ in $L^2(M)$, $\Sigma\in L^{\infty}(M)$ be a given function and $K$ a continuous linear operator from $L^2(M)$ into $L^2(M)$, then the operator  
		\begin{align*}
		\begin{cases}
		\quad T=A-\Sigma(m)+K &(i)\\
		\quad D(T)=D(A) &(ii)
		\end{cases}
		\end{align*}		 
		is the infinitesimal generator of a semigroup of class $\Co$ in $L^2(M)$. If further $K$ and the semigroup generated by $A$ operate in the positive cone of functions of $L^2(M)$ or respectively $L^2(M)$, then the semigroup generated by $T$ operates in the cone of positive functions of $L^2(M)$. 
	\end{prop}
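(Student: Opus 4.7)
The key observation is that the perturbation $B := -\Sigma(m) + K$ is a \emph{bounded} linear operator on $L^2(M)$: the multiplication operator by $\Sigma$ satisfies $\|\Sigma W\|_{L^2(M)} \le \|\Sigma\|_{L^\infty(M)} \|W\|_{L^2(M)}$, and $K \in \LBA(L^2(M))$ by hypothesis, so $B \in \LBA(L^2(M))$. The statement thus reduces to the classical bounded perturbation theorem for $\Co$ semigroups: if $A$ generates a $\Co$ semigroup and $B$ is bounded, then $T := A+B$ with $D(T)=D(A)$ again generates a $\Co$ semigroup.

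My plan is to prove this via the Dyson--Phillips iteration. Denote by $\{G(t)\}_{t\ge0}$ the semigroup generated by $A$ and fix constants $C\ge 1$, $\omega\in\R$ with $\|G(t)\|_{\LBA(L^2(M))}\le C e^{\omega t}$, a standard property of $\Co$ semigroups. Define inductively $U_0(t)=G(t)$ and
\begin{equation*}
U_{n+1}(t)x = \int_0^t G(t-s)\,B\,U_n(s)x\,ds,\qquad x\in L^2(M).
\end{equation*}
An induction gives $\|U_n(t)\| \le C^{n+1} e^{\omega t}(t\|B\|)^n/n!$, so $U(t):=\sum_{n\ge 0}U_n(t)$ converges in $\LBA(L^2(M))$ uniformly on compact time intervals. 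A Fubini-type reindexing then shows that $U$ solves the integral equation
\begin{equation*}
U(t)x = G(t)x+\int_0^t G(t-s)\,B\,U(s)x\,ds,
\end{equation*}
from which the semigroup law $U(s+t)=U(s)U(t)$ follows; strong continuity at $t=0$ is inherited from $G$ together with the uniform bound. Differentiating the integral equation at $t=0$ on elements of $D(A)$ identifies $A+B$ with domain $D(A)$ as the infinitesimal generator of $\{U(t)\}_{t\ge0}$.

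For the positivity assertion I would proceed in two steps. First, the multiplication semigroup $(e^{-t\Sigma}W)(m):=e^{-t\Sigma(m)}W(m)$ preserves the positive cone pointwise, so that the Trotter product formula
\begin{equation*}
e^{t(A-\Sigma)}x = \lim_{n\to\infty}\bigl(G(t/n)\,e^{-(t/n)\Sigma}\bigr)^n x
\end{equation*}
exhibits the semigroup generated by $A-\Sigma$ as a strong limit of compositions of positivity-preserving operators, and the closedness of the positive cone in $L^2(M)$ yields positivity of this semigroup. Second, applying the Dyson--Phillips construction of the previous paragraph with $A$ replaced by $A-\Sigma$ and $B$ replaced by $K$, each iterate $U_n(t)$ is built from positive-weight integrals of compositions of $K$ (positive by hypothesis) and the $(A-\Sigma)$-semigroup (just shown positive); every $U_n(t)$ therefore preserves the cone, and summing gives the positivity of the full semigroup $\{U(t)\}_{t\ge 0}$.

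I expect the main technical obstacle to lie in the generator identification inside the Dyson--Phillips step: showing rigorously that the strong derivative of $U(t)x$ at $t=0$ exists \emph{exactly} on $D(A)$ and equals $(A+B)x$ requires estimating the remainder $U(t)x - G(t)x - t B x$ via the integral equation and exploiting boundedness of $B$ to rule out any enlargement of the domain. Once that is handled, the positive cone invariance follows cleanly from the two-step splitting argument, and the remaining items in the proposition are bookkeeping.
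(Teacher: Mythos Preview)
The paper does not actually prove this proposition: its entire proof reads ``The proof can be found in \citet{dautray6}.'' Your proposal therefore goes well beyond what the paper supplies, and it is correct. Recognising that $B=-\Sigma+K$ is bounded on $L^2(M)$ reduces the statement to the classical bounded-perturbation theorem, and your Dyson--Phillips construction is the standard route to it; the two-step positivity argument (Trotter for the multiplicative part $-\Sigma$, then Dyson--Phillips with the positive perturbation $K$) is likewise sound, since $e^{-t\Sigma(m)}>0$ pointwise regardless of the sign of $\Sigma$ and the positive cone is closed under strong limits. This is precisely the mechanism behind the result in Dautray--Lions that the paper cites, so your approach is in the same spirit as the reference---you have simply written out what the paper chose to outsource.
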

	\begin{proof}
		The proof can be found in \citet{dautray6}.
	\end{proof}
	\begin{remark}
		The family $\{G(t)\}_{t\ge0}=e^{tA}$ forms a semigroup in $L^2(M)$ with infinitesimal generator $A$ and the family $\{G_1(t)\}_{t\ge0}=e^{t(A-\Sigma+K)}$ forms a semigroup in $L^2(M)$ with infinitesimal generator $T$.
	\end{remark}
	\begin{lemma}\label{lem:fbounded}
		Let $f(m,m\sp{\prime})$ be a given real positive function ($f\ge0$) and  measurable with respect to $m$ and $m\sp{\prime}$. Then there exist positive constants $C_a$ and $C_b$ such that	 
		\begin{equation}
		\begin{cases}
		&\int_{M} f(m,m\str)\;dm \le C_a \quad \forall m\sp{\prime}\in {M}\quad \text{(a)}\\
		&\int_{M} f(m,m\str)\;d{m\str} \le C_b \quad \forall m\in {M}\quad \text{(b)}
		\end{cases}
		\end{equation}%DautrayandLionsVers6p227 or 239pdfviewer		 
	\end{lemma}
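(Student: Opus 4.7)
The plan is to exploit the explicit decomposition $f(m,m\str)=p(m,m\str)\Gamma(m\str)$ given earlier, together with two structural facts established in Section~\ref{sec:IDEmod}: the normalization $\int_M p(m,m\str)\,dm = 1$ and the uniform bound $\Gamma(m)\le\gamma$ that follows from the piecewise definition of $\Gamma$ (the three cases give $0$, $\gamma e^{-\delta(m-m_d)^2}\le\gamma$, or $\gamma$). Both assertions in the lemma should then follow from routine estimates, with (a) essentially immediate and (b) requiring only one direct Gaussian computation.

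For part (a), since $\Gamma(m\str)$ does not depend on $m$, I would factor it out of the $m$-integral and apply the normalization condition on $p$ to obtain
\begin{equation*}
\int_M f(m,m\str)\,dm = \Gamma(m\str)\int_M p(m,m\str)\,dm = \Gamma(m\str) \le \gamma,
\end{equation*}
so that $C_a := \gamma$ works uniformly in $m\str\in M$.

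For part (b), we need a uniform-in-$m$ bound on $\int_M p(m,m\str)\,dm\str$, which the normalization does not supply. Here I would use the explicit formula for $p$. Outside the region $\{m\str > \max(m,m_t)\}$ the integrand vanishes; on the admissible region the integrand splits as
\begin{equation*}
p(m,m\str) = \lambda e^{-\beta(m-m_t)^2} + \lambda e^{-\beta(m-m\str+m_t)^2}.
\end{equation*}
The first summand is constant in $m\str$ and bounded by $\lambda$, so its integral over $M$ is at most $\lambda(m_{max}-m_{min})$. The second is a Gaussian in $m\str$ centered at $m+m_t$; extending the domain of integration to $\R$ gives the standard bound $\lambda\sqrt{\pi/\beta}$. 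Using $\Gamma(m\str)\le\gamma$ once more, we may take $C_b := \gamma\lambda\bigl((m_{max}-m_{min}) + \sqrt{\pi/\beta}\bigr)$.

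The main (and really only) obstacle is part (b), because the normalization provided for $p$ controls only the $m$-direction; one must open up the explicit formula and carry out the Gaussian tail estimate in the $m\str$-direction. Everything else reduces to quoting bounds already implicit in the model definition. Note also that the lemma as stated is false for an arbitrary positive measurable $f$, so the proof must (and does) use the specific structure of $f=p\,\Gamma$ from Section~\ref{sec:IDEmod}.
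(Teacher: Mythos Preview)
Your argument is correct, but it takes a different route from the paper's proof. The paper never invokes the normalization $\int_M p(m,m\str)\,dm=1$; instead it does a case split on the piecewise definitions of $p$ and $\Gamma$, bounds every Gaussian factor crudely by $e^{-x^2}\le 1$, and then integrates the resulting constant over the bounded interval $M$, arriving at $C_a=4\lambda\gamma\,m_{max}$. Part (b) is dispatched by the remark that it is ``analogous,'' i.e.\ the same crude bound works in the $m\str$-direction since $M$ is bounded. Your use of the normalization in (a) is more elegant and gives the sharper constant $C_a=\gamma$, but it does lean on a property that is asserted for $p$ in Section~\ref{sec:IDEmod} rather than verified from the explicit formula (and which a single $\lambda$ cannot make exact for all $m\str$). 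Conversely, your Gaussian tail estimate $\lambda\sqrt{\pi/\beta}$ in (b) is correct but unnecessary: since $M$ is bounded, the paper's cruder bound $e^{-\beta(\cdot)^2}\le 1$ already yields $\int_M p(m,m\str)\,dm\str\le 2\lambda\,|M|$ directly, without extending to~$\R$. Your closing remark that the lemma is false for an arbitrary positive measurable $f$ is well taken; both proofs rely on the specific structure $f=p\,\Gamma$.
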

	\begin{proof}
		(a) First, let 	 
		\begin{equation}
		\int_{M}f(m,m\str) dm=2\int_{M}p(m,m\str)\Gamma(m\str)dm.
		\end{equation}	 
		Due to the structure of $p(m,m\str)$ and $\Gamma(m\str)$ we distinguish between three different cases. \begin{enumerate}
			\item $m\str>m$ and $m\str>m_t$ and $m\str>m_d$:
			Then, we have 
			\begin{equation}
			\begin{aligned}
			\int_{M}f(m,m\str) dm&=2\int_{M}p(m,m\str)\Gamma(m\str)dm\\&=2\int_{M}(\lambda e^{-\beta(m-m_t)^2}+\lambda e^{-\beta(m-m\str+m_t)^2})\gamma dm \\&=2\lambda\gamma \int_{M}( \underbrace{e^{-\beta(m-m_t)^2}}_{\le 1}+ \underbrace{e^{-\beta(m-m\str+m_t)^2}}_{\le 1}) dm\\&\le 2\lambda\gamma \int_{M} 2 dm\\&=\lambda \gamma 4m\big|_{M}\\&<\lambda \gamma 4 m_{max}=:C_{a_1}
			\end{aligned}
			\notag
			\end{equation}
			with ${M}=(m_{min},m_{max})$.
			\item $m\str>m$ and $m\str>m_t$ and $m_t<m\str<m_d$:
			It holds
			\begin{equation}
			\begin{aligned}
			\int_{M}f(m,m\str) dm&=2\int_{M}p(m,m\str)\Gamma(m\str)dm\\&=2\int_{M}(\lambda e^{-\beta(m-m_t)^2}+\lambda e^{-\beta(m-m\str+m_t)^2})\gamma e^{-\delta(m\str-m_d)^2} dm \\&=2\lambda\gamma \int_{M}(\underbrace{e^{-\beta(m-m_t)^2}}_{\le 1}+ \underbrace{e^{-\beta(m-m\str+m_t)^2}}_{\le 1})\underbrace{e^{-\delta(m\str-m_d)^2}}_{\le 1} dm\\&\le 2\lambda\gamma \int_{M} 2 dm\\&=\lambda \gamma 4m\big|_{M}\\&<\lambda \gamma 4 m_{max}=:C_{a_2}
			\end{aligned}
			\notag
			\end{equation}
			with ${M}=(m_{min},m_{max})$.
			\item else: We obtain	 
			\begin{equation}
			\int_{M} 0\;dm = const=:C_{a_3}.
			\notag
			\end{equation}	 
		\end{enumerate}
		(b) The cases for	 
		\begin{equation}
		\int_{M}f(m,m\str) dm\str=2\int_{M}p(m,m\str)\Gamma(m\str)dm\str
		\notag
		\end{equation} 
		can be shown analogously to (a). 
	\end{proof}Now, all the preconditions for the following lemma are given. 
	\begin{lemma}
		Let Lemma~\ref{lem:fbounded} hold.
Then the operator $K$ defined by $$(K\eta)(m)=\int_{M} f(m,m\str)\eta(m\str)dm\str \quad \forall \eta \in L^2(M)$$ is linear and continuous from $L^2(M)$ into $L^2(M)$. 
		\label{Lem:bounded}
	\end{lemma}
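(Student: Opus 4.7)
The plan is to establish linearity by inspection and then prove continuity via the Schur test, using both bounds from Lemma~\ref{lem:fbounded} together with Cauchy--Schwarz and Fubini's theorem.

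First, linearity of $K$ is immediate from the linearity of the integral with respect to $\eta$. So all the work goes into the $L^2\to L^2$ bound, for which I would aim to show $\|K\eta\|_{L^2(M)}^2 \le C_a C_b\, \|\eta\|_{L^2(M)}^2$.

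The key step is a Cauchy--Schwarz trick on the pointwise quantity $|(K\eta)(m)|$, splitting the kernel as $f(m,m\str) = \sqrt{f(m,m\str)}\cdot\sqrt{f(m,m\str)}$ (this uses that $f\ge 0$). This yields
\begin{equation*}
|(K\eta)(m)|^2 \;\le\; \left(\int_{M} f(m,m\str)\,dm\str\right)\left(\int_{M} f(m,m\str)\,|\eta(m\str)|^2\,dm\str\right) \;\le\; C_b\int_{M} f(m,m\str)\,|\eta(m\str)|^2\,dm\str,
\end{equation*}
where bound (b) of Lemma~\ref{lem:fbounded} controls the first factor. Then I would integrate in $m$, invoke Fubini's theorem (justified by non-negativity of the integrand, i.e.\ Tonelli), and apply bound (a) to the inner $m$-integral:
\begin{equation*}
\|K\eta\|_{L^2(M)}^2 \;\le\; C_b\int_{M}|\eta(m\str)|^2\left(\int_{M} f(m,m\str)\,dm\right) dm\str \;\le\; C_a C_b\,\|\eta\|_{L^2(M)}^2.
\end{equation*}
Continuity of $K$ then follows with operator norm at most $\sqrt{C_a C_b}$.

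There is no real obstacle here; the only subtle point is verifying the hypotheses for Fubini/Tonelli, which are automatic because $f \ge 0$ and $\eta \in L^2(M)$ on the bounded interval $M=(m_{min},m_{max})$, so all integrands are measurable and non-negative. The proof is essentially the standard Schur test, and it is the two-sided kernel estimate supplied by Lemma~\ref{lem:fbounded} that makes it applicable.
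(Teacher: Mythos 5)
Your proof is correct and is essentially the paper's own argument: the paper simply cites Lemma~1 of \citet[Chapter~XXI,\S2]{dautray6} in the special case $p=p\str=2$, whose proof is exactly the Schur-test computation you wrote out (splitting $f=\sqrt{f}\cdot\sqrt{f}$, Cauchy--Schwarz, Tonelli, and the two kernel bounds of Lemma~\ref{lem:fbounded}, giving $\|K\|\le\sqrt{C_aC_b}$). No gaps; you have merely made explicit what the paper delegates to the reference.
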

	\begin{proof}
		The proof works analogously to the proof of Lemma~1 in \citet[Chapter~XXI,\S2]{dautray6} with the special case $p=p\str=2$, i.e.\ Cauchy-Schwarz inequality.
	\end{proof}
	Let $\W_2$ be the space defined by 
	\begin{equation}
	\W_2=\{W\in L^2(M): \bar{r} \cdot\div W\in L^2(M)\}
	\end{equation}
	\begin{remark}
		$W$ is called a weak solution of \eqref{eq:probexun1} if $y\in\W_2$, $W(0)=W_0$ and \begin{equation}
		\begin{aligned}
		&\frac{d}{dt}\int_{M}v(m)W(m,t)dm+\int_{M}(\bar{r}W(m,t))\cdot \div(v(m))dm\\&=-\int_{M}KW(m,t)v(m)dm+\int_{M}(\Sigma(m)+\tilde{\bar{r}})W(m,t)v(m)dm\quad\forall v\in H^1({M}),
		\end{aligned}
		\end{equation}
		which can be derived making use of Gauss's theorem or integration by parts in multidimensions respectively as e.g. in \citet{forster} and $W(m,t)= 0$ on $\Theta$. 
	\end{remark}
	The solution of problem~\eqref{eq:probexun1} is given by 
	\begin{theorem}
		Let the data of problem~\eqref{eq:probexun1} satisfy $\Sigma \in L^{\infty}(M)$ with $\Sigma(m) \ge 0 \;\forall m$, $K$ be the operator defined in Lemma~\ref{Lem:bounded}, where $f$ is a positive function in terms of Lemma~\ref{lem:fbounded} and $W_0\in L^2(M)$.\\
		Then, problem~\eqref{eq:probexun1} has a unique weak solution $W$ in the space $\W_2$ and \(W \in \C([0,t_f]; L^2(M))\).
		If for $W_0$ it holds also that $\bar{r}\div(W_0)\in L^2(M)$ and $W_0\big|_{\Theta}=0$ $(W_0 \in D(A))$, then $W$ is a strong solution of problem~\eqref{eq:probexun1}.\\ This solution satisfies $W\in\C([0,t_f];L^2(M))$, $\bar{r}\cdot\div(W(m,t))\in\C([0,t_f];L^2(M))$ and $W(t)\big|_\Theta=0 \; \forall t\in[0,t_f]$ \quad$(W\in \C([0,t_f],D(A))).$\\
		Furthermore, with $W_0\ge0$ we have $W\ge0$. 
		\label{Theor:weakstrongsol}
	\end{theorem}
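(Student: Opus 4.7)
The plan is to recast the problem as the abstract Cauchy problem \eqref{eq:probexun1prime}, $\partial_t W = TW$ with $T = A - \Sigma + K$ and $W(0)=W_0$, and then hand everything over to Proposition~\ref{Prop:Prop2}. The bulk of the work therefore reduces to verifying the three hypotheses of that proposition for the concrete operators at hand; once these are in place, existence, uniqueness, regularity and positivity of $W$ all drop out as consequences of the resulting $C_0$-semigroup $\{G_1(t)\}_{t\ge 0}$ on $L^2(M)$.

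First I would check the three ingredients one by one. For $A$ as in \eqref{eq:defADA}, with $\bar r>0$ constant and $M$ a bounded interval, the candidate semigroup is the pure transport semigroup $(e^{tA}W_0)(m)=W_0(m-\bar r t)$ extended by zero for $m-\bar r t<m_{min}$; strong continuity on $L^2(M)$ and the identification of its generator are a direct computation (alternatively, Hille--Yosida applied to $-\bar r\,\partial_m$ with the inflow condition at $m_{min}$). For $\Sigma$, the explicit piecewise definition of $\Gamma$ gives $\Gamma(m)\le\gamma$ and $\bar k_d$ is a constant, so $\Sigma=\Gamma+\bar k_d\in L^\infty(M)$. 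For $K$, the hypotheses of Lemma~\ref{Lem:bounded} are exactly the kernel bounds supplied by Lemma~\ref{lem:fbounded}, hence $K\in\LBA(L^2(M))$. Proposition~\ref{Prop:Prop2} then yields at once that $T$ with $D(T)=D(A)$ generates a $C_0$-semigroup $\{G_1(t)\}_{t\ge 0}$ on $L^2(M)$.

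The existence assertions are then packaging. For $W_0\in L^2(M)$ I set $W(t):=G_1(t)W_0$; strong continuity yields $W\in\C([0,t_f];L^2(M))$. To recover the weak formulation stated in the preceding Remark I would first assume $W_0\in D(A)$, so that $W\in\C^1([0,t_f];L^2(M))\cap\C([0,t_f];D(A))$ and $\partial_t W=TW$ holds pointwise; testing against $v\in H^1(M)$, integrating by parts and using $W(t)|_\Theta=0$ to kill the boundary term, one obtains the identity of the Remark on the dense set $D(A)$, after which density of $D(A)$ in $L^2(M)$ and continuity of $G_1(t)$ in the initial datum extend it to arbitrary $W_0\in L^2(M)$. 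If moreover $W_0\in D(A)$, the same computation produces the strong solution with the claimed regularity $\bar r\cdot\div W\in\C([0,t_f];L^2(M))$ and $W(t)|_\Theta=0$ for all $t$. Uniqueness is inherited from the uniqueness of orbits of a $C_0$-semigroup. Positivity follows from the last clause of Proposition~\ref{Prop:Prop2}: the transport semigroup $e^{tA}$ is visibly positivity-preserving, and $K$ is positivity-preserving because $f=p\,\Gamma\ge 0$; hence $\{G_1(t)\}$ preserves the positive cone and $W_0\ge 0$ implies $W(t)\ge 0$.

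The main obstacle I anticipate is the clean identification of $A$ as a $C_0$-semigroup generator with the maximal-type domain in \eqref{eq:defADA}: one must verify that $\{W\in L^2(M):\bar r\,\div W\in L^2(M),\ W|_\Theta=0\}$ coincides with the classical Sobolev-type domain on which the transport semigroup is known to live, make sense of the boundary traces at both endpoints (noting that, since $\bar r>0$, the condition at the outflow $m_{max}$ is automatically propagated by the flow for data in $D(A)$ while only the inflow condition at $m_{min}$ is actively enforcing the semigroup property), and confirm the resolvent estimate required by Hille--Yosida. Everything after that, including the passage from strong to weak solutions and the positivity argument, is standard semigroup bookkeeping layered on top of Proposition~\ref{Prop:Prop2}.
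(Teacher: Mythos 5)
Your existence, regularity and positivity arguments follow essentially the same route as the paper: verify the hypotheses of Proposition~\ref{Prop:Prop2} for $T=A-\Sigma+K$ (with $\Sigma=\Gamma+\bar k_d\in L^{\infty}(M)$ from the explicit bound $\Gamma\le\gamma$, and $K\in\LBA(L^2(M))$ from Lemmas~\ref{lem:fbounded} and~\ref{Lem:bounded}), set $W(t)=e^{t(A+K-\Sigma)}W_0$, and read off the continuity in time, the strong solution for $W_0\in D(A)$, and the preservation of the positive cone. That part matches the paper's proof.

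The genuine gap is the uniqueness step. The theorem asserts uniqueness of the \emph{weak} solution in the class $\W_2$, i.e.\ that any $W\in\W_2$ satisfying the weak formulation of the Remark with datum $W_0$ coincides with the semigroup orbit. Your sentence ``uniqueness is inherited from the uniqueness of orbits of a $C_0$-semigroup'' only says that $t\mapsto G_1(t)W_0$ is unambiguously defined; it does not rule out a second weak solution that is not a priori a mild solution, and the identification of weak with mild solutions is precisely what must be proved (it would require, e.g., testing against $D(T^{*})$ in the spirit of Ball's theorem, which you do not set up). The paper closes this by a direct energy argument: assume $W_0=0$, apply the Green-type identity (formula~2.33 of Dautray--Lions, Ch.~XXI, \S2) to a weak solution $W\in\W_2$, which yields
\begin{equation}
\|W(\tau)\|_{L^2(M)}^2+\int_0^{\tau}\big\|W(t)\big|_{\Theta}\big\|_{L^2(\Theta)}^2\,dt\le 2\int_0^{\tau}\left(W(t),KW(t)-\Sigma W(t)\right)_{L^2(M)}dt,
\notag
\end{equation}
then uses $\Sigma\ge 0$ and the boundedness of $K$ to obtain $\|W(\tau)\|_{L^2(M)}^2\le 2\|K\|_{L^2(M)}\int_0^{\tau}\|W(t)\|_{L^2(M)}^2\,dt$, and concludes $W=0$ by Gronwall's lemma. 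Some estimate of this kind (or an explicit weak-equals-mild argument) is indispensable; as written, your proof delivers existence of a weak solution but uniqueness only within the narrower class of semigroup solutions, which is weaker than the statement.
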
 
	\begin{proof}
		See below.
	\end{proof}
	Before we prove this theorem, let us first show a necessary precondition for Theorem~\ref{Theor:weakstrongsol}.
	\begin{prop}
		$\Sigma\in L^{\infty}(M)$.
	\end{prop}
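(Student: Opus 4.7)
The plan is to simply inspect the definition of $\Sigma$ piece by piece and produce an explicit $L^\infty$ bound. Recall that in the simplified setting the ethanol concentration $E$ is assumed constant, so $\Phi(E)$ is a fixed non‑negative number and $\bar{k}_d=\Phi(E)+k_d$ is a non‑negative constant. Consequently $\Sigma(m)=\Gamma(m)+\bar{k}_d$ is essentially bounded on $M=(m_{min},m_{max})$ if and only if $\Gamma$ is, and it suffices to bound $\Gamma$.

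First I would split $M$ according to the three branches of the piecewise definition of $\Gamma$. On $\{m\in M:m\le m_t\}$ the division rate vanishes. On the middle interval $(m_t,m_d)$ we have $\Gamma(m)=\gamma e^{-\delta(m-m_d)^2}$, and since the exponent is non‑positive we get $\Gamma(m)\le\gamma$. On the remaining part $\{m\in M:m\ge m_d\}$ we have $\Gamma(m)=\gamma$ identically. Hence $0\le\Gamma(m)\le\gamma$ pointwise on $M$, and $\Gamma$ is measurable as a piecewise combination of a constant and a Gaussian (both continuous on their respective sets), so $\Gamma\in L^\infty(M)$ with $\|\Gamma\|_{L^\infty(M)}\le\gamma$.

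Combining these observations gives
\begin{equation*}
\operatorname*{ess\,sup}_{m\in M}\,|\Sigma(m)|\;\le\;\|\Gamma\|_{L^\infty(M)}+\bar{k}_d\;\le\;\gamma+\Phi(E)+k_d<\infty,
\end{equation*}
so $\Sigma\in L^\infty(M)$ with the explicit bound $\|\Sigma\|_{L^\infty(M)}\le\gamma+\Phi(E)+k_d$. Non‑negativity of $\Sigma$, which is also required in Theorem~\ref{Theor:weakstrongsol}, follows for free since each summand is non‑negative by construction.

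There is no real obstacle here; the statement is essentially a sanity check on the model parameters, and the only thing one must be careful about is that the claim genuinely uses the simplified‑case assumption that $E$ (and therefore $\Phi(E)$) is constant in $m$ and $t$. In the full coupled model $\Phi(E(t))$ would still be bounded on any finite time interval provided $E$ stays in a bounded range, but that discussion is outside the scope of the present proposition.
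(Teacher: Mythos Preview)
Your proof is correct and follows essentially the same approach as the paper: bound $\Gamma$ pointwise by $\gamma$ and add the constant $\bar{k}_d$. You are simply more explicit about the three cases in the piecewise definition of $\Gamma$, and your non-strict bound $\|\Gamma\|_{L^\infty}\le\gamma$ is in fact the right one (the paper writes a strict inequality, but $\Gamma(m)=\gamma$ for $m\ge m_d$).
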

	\begin{proof}
		\begin{equation}
		\esssup\|\Sigma\|_{L^{\infty}({M})}=\inf\{C\ge 0:|\Sigma(m)|\le C\text{ for almost every } m\},\quad \text{where}
		\notag
		\end{equation} 
		\begin{equation}
		\begin{aligned}\quad|\Gamma(m)+\bar{k}_d|&\le|\Gamma(m)|+\bar{k}_d\\&<\gamma +\bar{k}_d=:C_1\ge 0\; \text{ and}\;<\infty \quad\forall m,
		\end{aligned}
		\notag
		\end{equation} 	 
		Then	 
		\begin{equation}
		\inf\{C\ge 0:|\Sigma(m)|\le C\text{ for almost every } m\}<\infty
		\notag
		\end{equation}	 
		with $C<C_1$ holds.
	\end{proof}
	% % % % % % % % % % % % % % % % % % % % % % % % % % % %
	\begin{proof}[Proof of Theorem \ref{Theor:weakstrongsol}]
		We apply Proposition~\ref{Prop:Prop2} with $A$ defined by 
		\begin{equation}
		AW = \bar{r}\div (W(m,t)) %\Parttwo{W(m,t)}{m}
		\end{equation}

		and 	\begin{equation}
		D(A) = \{W \in \W^2(M);W|_{\Theta}=0\},
		\end{equation}
		with $\W^2(M)=\{W\in L^2(M); \;\bar{r} \cdot\div W \in L^2(M)\}$ and set
\begin{equation}
		W(t)=e^{t(A+K-\Sigma)}W_0.
		\end{equation}
		%where for our problem  the right hand side $q$ is equal to zero, such that the solution reduces to 
		%\begin{equation}
		%W(t)=e^{t(A+K-\Sigma)}W_0 + const,
		%\end{equation} 
		%where $const$ is a constant.\\
		%The rest of the proof works as in \cite{dautray6}, Chapter XXI, \S2.
		% \begin{rev}
		$W$ is a weak solution of \eqref{eq:probexun1}. Furthermore, we will also demonstrate that this is a solution in the sense of distributions (i.e.\ $\in \mathcal{D}\str(M\times(0,t_f))$ of \ref{eq:probexun1a}).\\
		%$\mathcal{D}\str$ space of distributions on $M\times(0,t_f)$, i.e.\ the set of continuous linear forms on $\mathcal{D}$.\\ 
		Let us now show the uniqueness of our solution. Therefor suppose that $W_0=0$ and that $W\in\W_2$ fulfills \eqref{eq:probexun1}. The application of formula~2.33 from \citet[Chapter~XXI,\S2]{dautray6} to $W$ (with $w=W$ and $u=W$) results in
			  
		\begin{equation}\begin{aligned}
		&\int_{0}^{\tau}\left(\left(W(t),\bar{r} \div W(t)+\Parttwo{W}{t}(t)\right)_{L^2(M)}+\left(\bar{r} \div W(t)+\Parttwo{W}{t}(t),W\right)_{L^2(M)}\right)dt\\&=
		2\int_0^{\tau} \left(W(t), KW(t)-\Sigma W(t)\right)_{L^2(M)}dt\\&=\|W(\tau)\|_{L^2(M)}^2 +\underbrace{\int_0^{\tau}(\|W(t)\big|_{\Theta}\|_{L^2(\Theta)}^2) dt}_{=:const}-\underbrace{\|W(0)\|_{L^2(M)}^2}_{=0}\\&\ge \|W(\tau)\|_{L^2(M)}^2,
		\end{aligned}
		\notag
		\end{equation}
			 
		where $const$ is a positive constant.\\
		Moreover, with $\Sigma$ being a positive function, we receive
			 
		\begin{equation}
		\begin{aligned}
		\|W(\tau)\|_{L^2(M)}^2&\le 2\int_0^{\tau}\left(W(t),KW(t)-\Sigma(m)W(t)\right)_{L^2(M)} dt\\&\le 2\|K\|_{L^2(M)} \int_0^{\tau}\|W(t)\|_{L^2(M)}^2 dt.
		\end{aligned}
		\end{equation}
			 
		With Gronwall's lemma (see e.g.\ Chapter XVIII, \S5 in \citet{dautray5}), we get $W=0$.
	\end{proof}
	% % % % % % % % % % % % % % % % % % % % % % % % % % % % % % %
	% % % % % % % % % % % % % % % % % % % % % % % % % % % % % % %
	% % % % % % % % % % % % % % % % % % % % % % % % % % % % % % %
	
	%% % % % % % % % % % % % % % % % % % % %
	\section{Numerical Results and Discussion}\label{sect:resultsdiscuss}
	To solve the nonlinear system of equations as explained in Remark~\ref{rem:cnNLP}, the derivatives of the right hand side are required. These could be approximated e.g.\ via finite difference schemes. Nevertheless providing gradient information can enhance the algorithm and is recommended to be made available for the discretization scheme.\\ 
	For the numerical results presented in the following, the analytically derived Jacobian of the right hand side $f$ of the system of integro-differential equations as in Appendix~\ref{app:Jacobianf} of this article is used. The system is given by
		   
	\begin{equation}
	\Parttwo{y}{t}=f(y,m,t)
	\end{equation}
		 
	with $f(y,m,t)=(f_{w_i}, f_N, f_E, f_S, f_O)^T \;\forall i=1,\ldots, N_W$ with $y=(w_i,N,E,S,O)^T$ in discretized form. % % % % % % % % % % % % % % % % % % % % % % % % % %
	As explained in Section~\ref{Sec:FVM} for the discretized form using a first order upwind scheme for the flux approximation, it is distinguished between $w$ at cell $i$ and cell $i-1$. 
	Note that the right hand side $f$ here also includes the advection term.
	All of the results presented in this section were computed in MATLAB \citep{MATLAB2017b}, with the MATLAB backslash operator as the underlying linear system solver.\\
	In the following first some clarifications for the cell mass are given. 
	The mass of cells measured in experiments available in literature differs from $5\times10^{-7}$mg (C. N\"ageli) to $5.5\times10^{-8}$mg (M. Rubner) to  $6.3-37\times 10^{-8}$mg (G. Seliber and R. Katznelson) \citep{Seliber1929}.
	For the comparison with the model based on ordinary differential equations, an average cell mass of $5\times10^{-7}$mg (C. N\"ageli; \citep{Seliber1929}) is used.
	In this work, cell masses of $m\in[0,1\times10^{-9}$g$]$ are considered.
	For computational purposes this is scaled to $m\in[0.001,0.999]$, such that with the values for the allowed cell masses and $m_t$ and $m_d$ from \citet{dynamichens}, this yields $m_t=3.7917\times10^{-10}$ and $m_d=8.5417\times10^{-10}$ for $m\in[0,1\times10^{-9}$g$]$ and furthermore, $m_t=0.3784$ and $m_d=0.8525$ for $m\in[0.001,0.999]$.
	In detail for $m\in[0,12\times10^{-13}$g$]$, \citet{dynamichens} use $m_{t_0}=4.55\times10^{-13}$g and $m_{d_0}=10.25\times10^{-13}$g, such that $m_t$ and $m_d$ are chosen based on these values combined with a dependence on the effective substrate concentration. Here a simplification is used and the values of $m_{t_0}$ and $m_{d_0}$ are chosen as orientation values for $m_t$ and $m_d$. 
	This results in
	 \begin{equation}m_{t_{nn}}=\frac{(10^{-9}-0)}{12\times10^{-13}-0}(4.55\times10^{-13}-0)+0=3.7917\times10^{-10},\end{equation}  where $m_{t_{nn}}$ denotes the transient mass normalized to $[0,1\times10^{-9}]$,
	and further this yields
	 \begin{equation}m_{t_n}=\frac{(0.999-0.001)}{10^{-9}-0}(3.7917\times10^{-10}-0)+0=0.3784,\end{equation} where $m_{t_n}$ represents the transient mass normalized to $[0.001,0.999]$.\\
	In the same way, this yields
	 
	\begin{equation}m_{d_{nn}}=\frac{(10^{-9}-0)}{12\times10^{-13}-0}(10.25\times10^{-13}-0)+0=8.5417\times10^{-10},\end{equation} where $m_{d_{nn}}$ denotes the division mass normalized to $[0,1\times10^{-9}]$, 
	and further this results in
	 \begin{equation}m_{d_n}=\frac{(0.999-0.001)}{10^{-9}-0}(8.5417\times10^{-10}-0)+0=0.8525,\end{equation} where $m_{d_n}$ represents the division mass normalized to $[0.001,0.999]$.\\
	For all results computed in this section, we use the finite volume discretization with an upwind scheme for the flux approximation as derived in Section~\ref{Sec:FVM}. The involved integrals are approximated by the composite trapezoidal rule with thirty subintervals.\\%Stuetzstellen=nodes
	The kinetic parameters for this model and the model based on ordinary differential equations, to which it will be compared to later in this section, according to Table~\ref{Tab:parameterIDE}.
	\begin{table}[h]\centering
		%\parbox{.3\linewidth}{
		\begin{minipage}[b]{.6\linewidth}
			\centering
		%%\begin{minipage}{.5\columnwidth}
		%\centering
		\begin{tabular}{|l S|l S|}\hline
			%& \text{Parameter estimation problem (\ref{inverseprob})} \\\hline
			\text{Parameters}& \text{set} & \text{Parameters}& \text{set}\\\hline
			$\mu_1$ & 0.1681 & $\beta_1$ & 0.1348\\
			$\mu_2$ & 0.0 & $\beta_2$ & 0.0\\
			$K_N$ & 0.1096 & $k_{d_1}$ & 99.86 \\
			$k_1$ & 0.018 & $k_{d_2}$ & 0.0021\\ %0.072/4
			$K_{S_1}$ & 29.5 & $K_O$ & 0.0007\\
			$K_{S_2}$ & 4.3262 & $k_4$ & 0.0006\\%0.0025/4=0.000625
			$K_{E_1}$ & 0.2616 & $tol$ & 70 \\
			$K_{E_2}$ & 38.90 & $k_d$ & 0.01\\
			$\epsilon$ & 0.02&&\\
			\hline
		\end{tabular}
		\caption{Kinetic parameter values for the IDE model and the ODE model.}	\label{Tab:parameterIDE}
	\end{minipage}
\hspace{.75em}
\begin{minipage}[b]{.3\linewidth}
		%%\begin{minipage}{.5\columnwidth}
		%\centering
		\begin{tabular}{|l S|}\hline
			%& \text{Parameter estimation problem (\ref{inverseprob})} \\\hline
			\text{Parameters}& \text{set} \\\hline
			$\gamma$ & 200\\ 
			$\delta$ & 50\\
			$\lambda$ & 5.6419\\
			$\beta$ & 400\\
			\hline
		\end{tabular}
		\caption{Cell division parameter values for the IDE model.}
		\label{Tab:parameterIDEdiv}
\end{minipage}
	\end{table}
	Moreover, the parameters related to yeast cell division in the IDE model are set to the values, given in Table~\ref{Tab:parameterIDEdiv},
	where the parameters $\gamma$, $\delta$ and $\beta$ were set to values based on experience geared to literature values. Then, $\lambda$ is calculated based on these parameter values ensuring that  \begin{equation}\int_Mp(m,m\str)\;dm=1\end{equation} holds. This property assures that the partition probability density function or partitioning function is truly a density function as explained in Section~\ref{sec:IDEmod}.\\
	For the temperature, a traditional fermentation temperature profile as used for example in some fermentation processes performed in 2011 at the DLR (Dienstleistungszentrum L\"andlicher Raum) Mosel in Bernkastel-Kues, one of our public research partners in the R\OE NOBIO project, is used. This means that the temperature is set to $15^{\circ}$C for the first half of the fermentation and then it is increased to $18^{\circ}$C for the second half of the fermentation, where a linear increase in between is assumed here. This profile is illustrated for instance in the last subplot in Figure~9 in blue.\\%\cref{Fig:CNDallCNdiffdist150masscell} in blue.\\
	All of the numerical results presented in the following were generated on a 64bit Dell XPS 13 7390 Laptop with Intel(R) Core(TM) i7-10510U CPU at 1.8 GHz with 16.1 GB of RAM.\\ First, the cell number densities for a mass discretization with $30$, $50$, $100$ and $150$ mass cells using the implicit trapezoidal rule for the time discretization are compared. The time step for the different cases is selected with respect to the CFL condition and with respect to the convergence of the Newton's method. The convergence tolerance is set to $1\times10^{-10}$ and the maximum number of iterations is set to $100$ iterations for the Newton's method.
	\begin{figure}[htbp]
					\centering
		\begin{subfigure}[c]{0.49\textwidth}
			%\vspace{-.8em}
			%\begin{figure}[H]
			\includegraphics[width=17em]{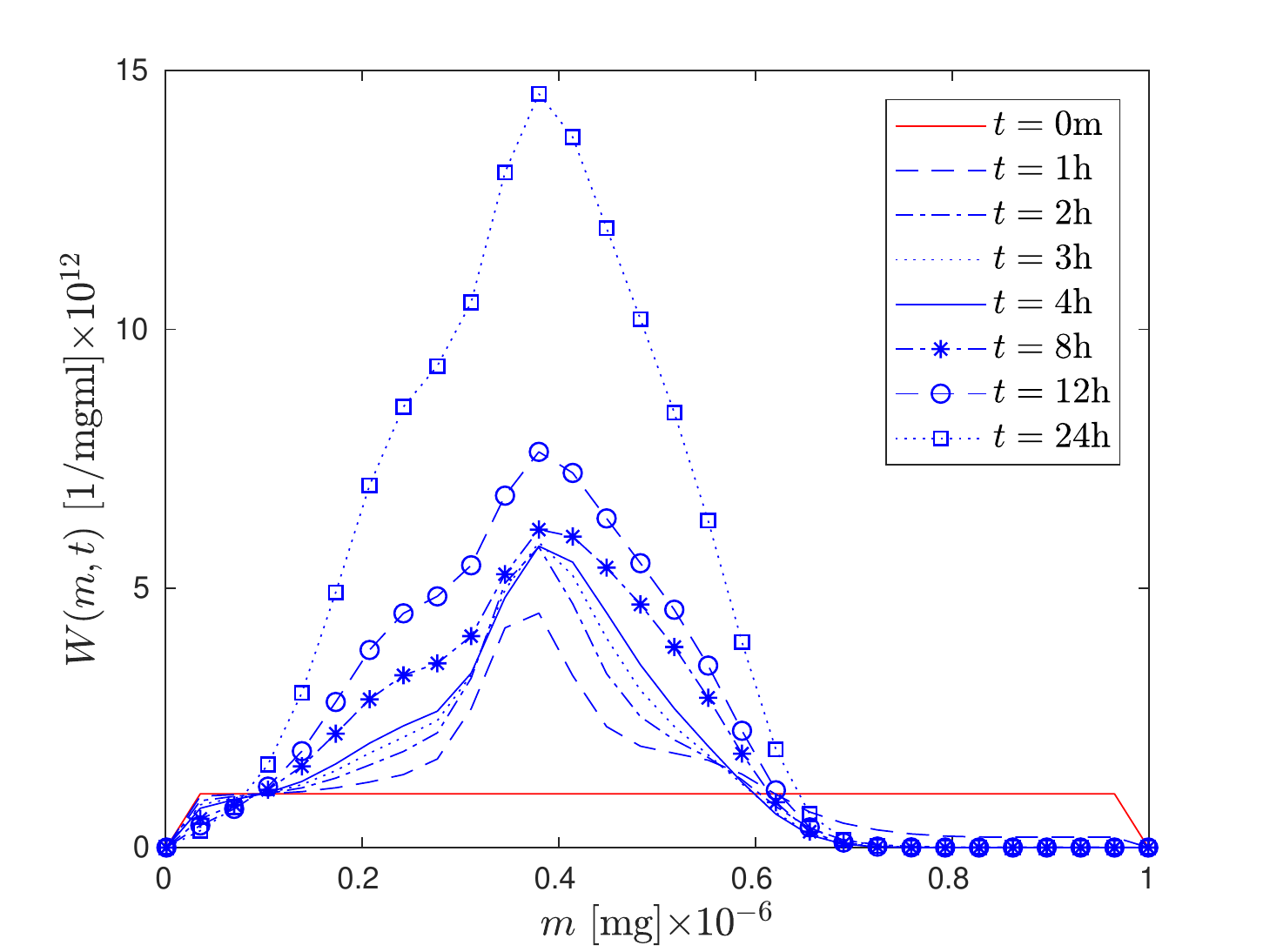}
			\caption{Mass discretization with 30 mass cells and time discretization with $h=\dfrac{1}{48}\approx0.0208$.}
			%\end{figure}
		\end{subfigure}
		\begin{subfigure}[c]{0.49\textwidth}
			%\column{0.47\textwidth}
			%\begin{figure}[H]
			\includegraphics[width=17em]{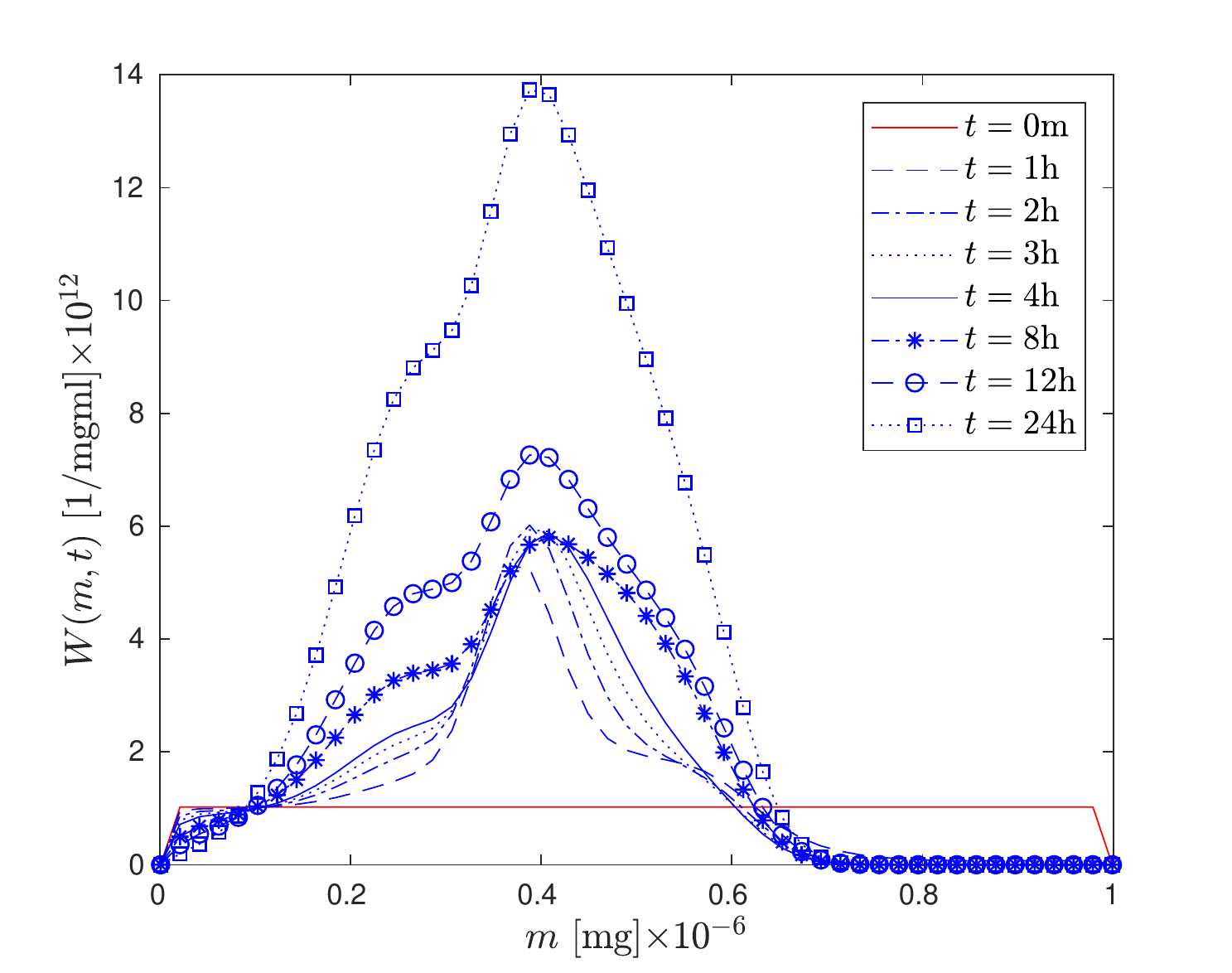}
			\caption{Mass discretization with 50 mass cells and time discretization with $h=\dfrac{1}{72}\approx0.0139$.}
			%\end{figure}
		\end{subfigure}
		\begin{subfigure}[c]{0.49\textwidth}
			\vspace{-1em}
			%\vspace{-1em}
			%\begin{figure}[H]
%			\hspace*{-1em}
			\includegraphics[width=17em]{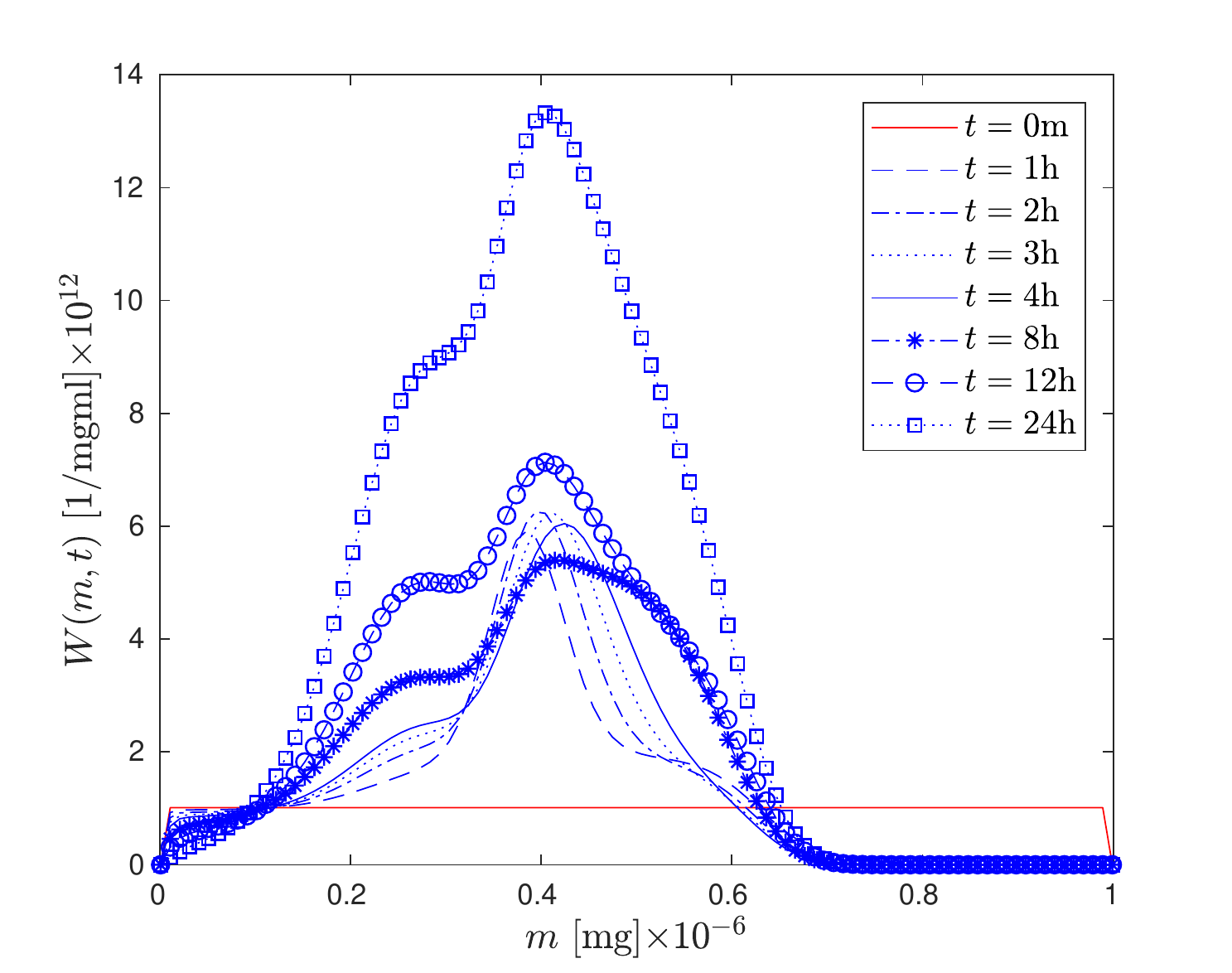}
				\caption{Mass discretization with 100 mass cells and time discretization with $h=\dfrac{1}{144}\approx0.0069$.}
				%\end{figure}
			\end{subfigure}
			\begin{subfigure}[c]{0.49\textwidth}
					\vspace{-1em}
				%\column{0.47\textwidth}
				%\begin{figure}[H]
%				\hspace*{-1em}
				\includegraphics[width=17em]{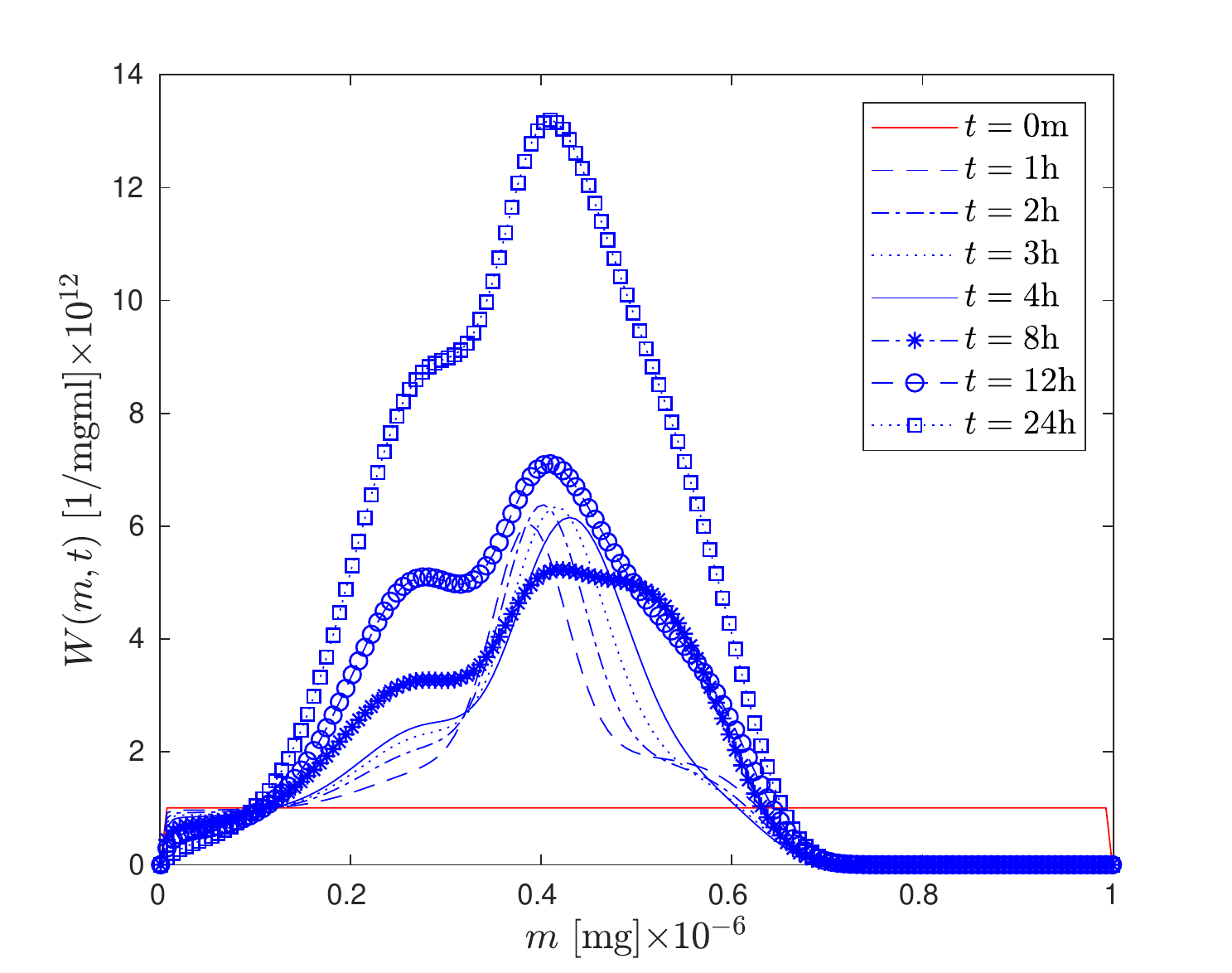}
					\caption{Mass discretization with 150 mass cells and time discretization with $h=\dfrac{1}{192}\approx0.0052$.}
					%\end{figure}
				\end{subfigure}
				\centering
				\caption{Cell number density for different cell masses for first 24 hours  with constant initial distribution and implicit trapezoidal rule: Mass discretization with 30, 50, 100, 150 cells.}
				\label{Fig:CNDfirstCNconstant3050100150masscells}
			\end{figure}Figure~\ref{Fig:CNDfirstCNconstant3050100150masscells} shows cell number densities for the first 24 hours at different points in time. Thereby in  Figure~\ref{Fig:CNDfirstCNconstant3050100150masscells}, the discretization in the mass component is realized with 30, 50, 100 and 150 cells and the discretization in time with a time step of $h=\frac{1}{48}\approx0.0208$, $h=\frac{1}{72}\approx0.0139$, $h=\frac{1}{144}\approx0.0069$ and $h=\frac{1}{192}\approx0.0052$ respectively.
In Figure~\ref{Fig:CNDfirstCNconstant3050100150masscells}, for thirty mass cells a trend is already visible and more cells basically yield finer trajectories.\\
Figure~\ref{Fig:CND20CNconstant3050100150masscells} approves this observation for the cell number density at more time instances along the whole time horizon. The computation times here range from 33.38 to 180.45 CPUs for 30 to 150 cells.\\
	\begin{figure}[htbp]
				\centering
	\begin{subfigure}[c]{0.49\textwidth}
		%\vspace{-.8em}
		%\begin{figure}[H]
		\includegraphics[width=17em]{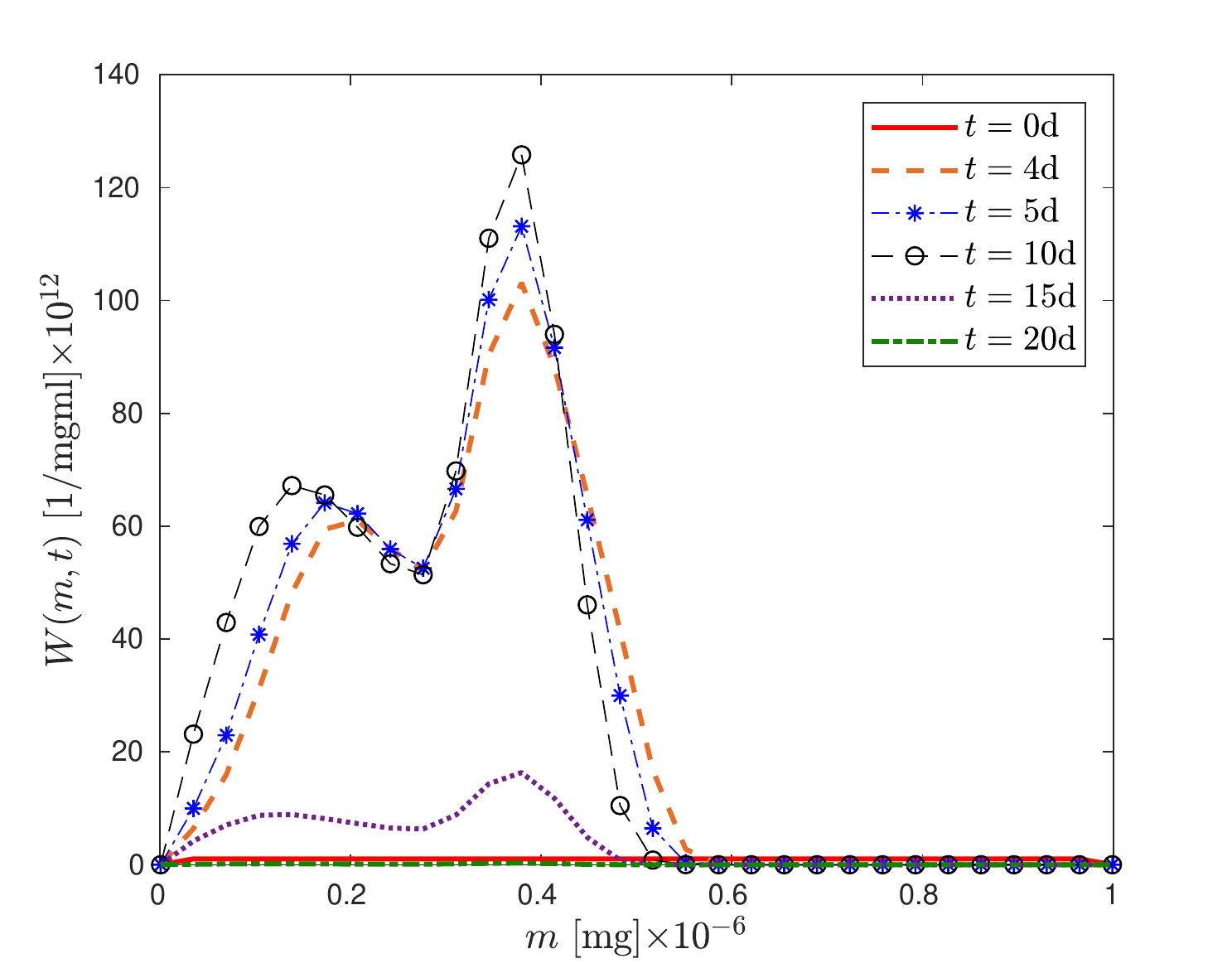}
		\caption{Mass discretization with 30 mass cells and time discretization with $h=\dfrac{1}{48}\approx0.0208$.}
		%\end{figure}
	\end{subfigure}
	\begin{subfigure}[c]{0.49\textwidth}
		%\column{0.47\textwidth}
		%\begin{figure}[H]
		\includegraphics[width=17em]{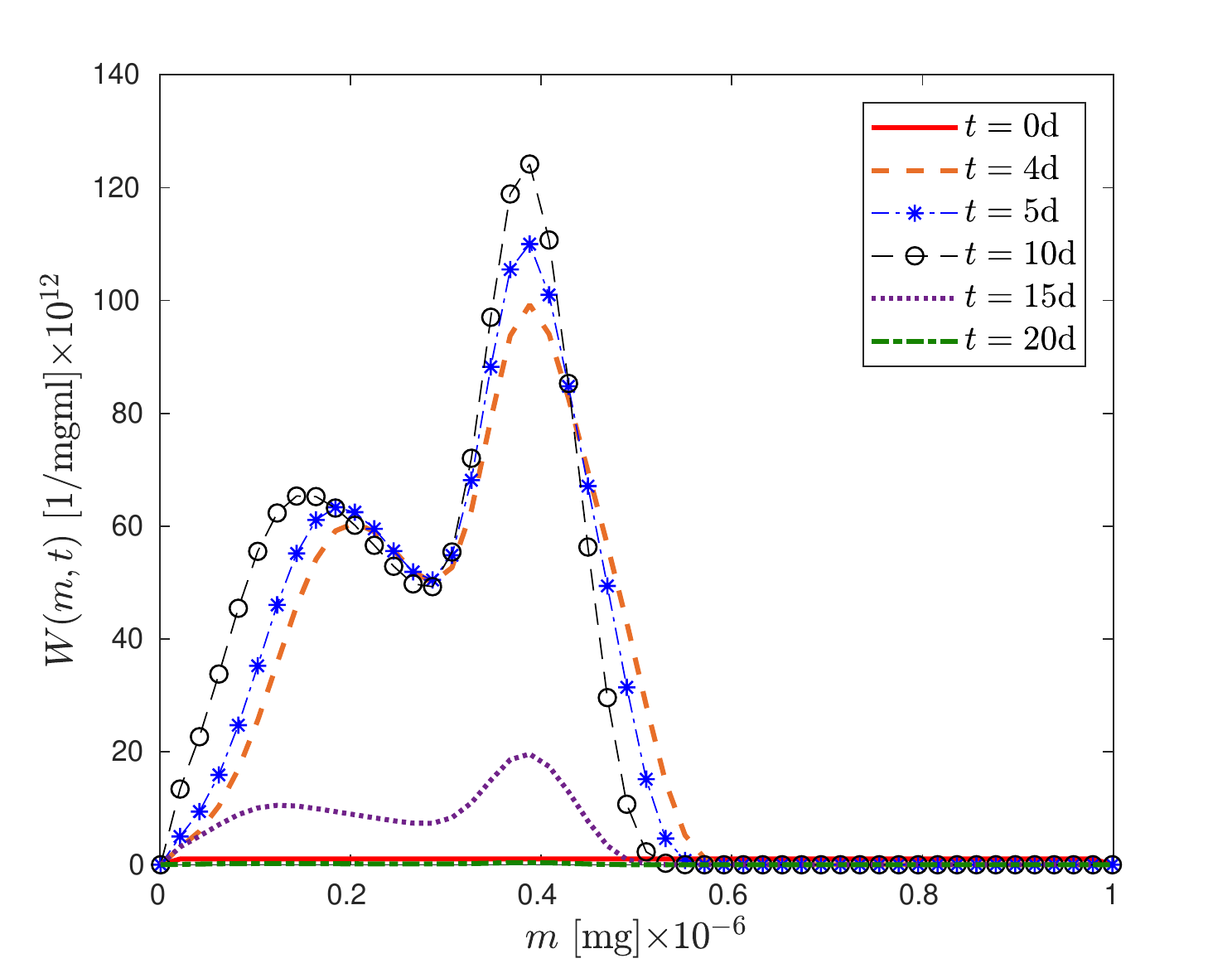}
		\caption{Mass discretization with 50 mass cells and time discretization with $h=\dfrac{1}{72}\approx0.0139$.}
		%\end{figure}
	\end{subfigure}
	\begin{subfigure}[c]{0.49\textwidth}
		\vspace{-1em}
		%\vspace{-.8em}
		%\vspace{-1em}
		%\begin{figure}[H]
		%			\hspace*{-1em}
		\includegraphics[width=17em]{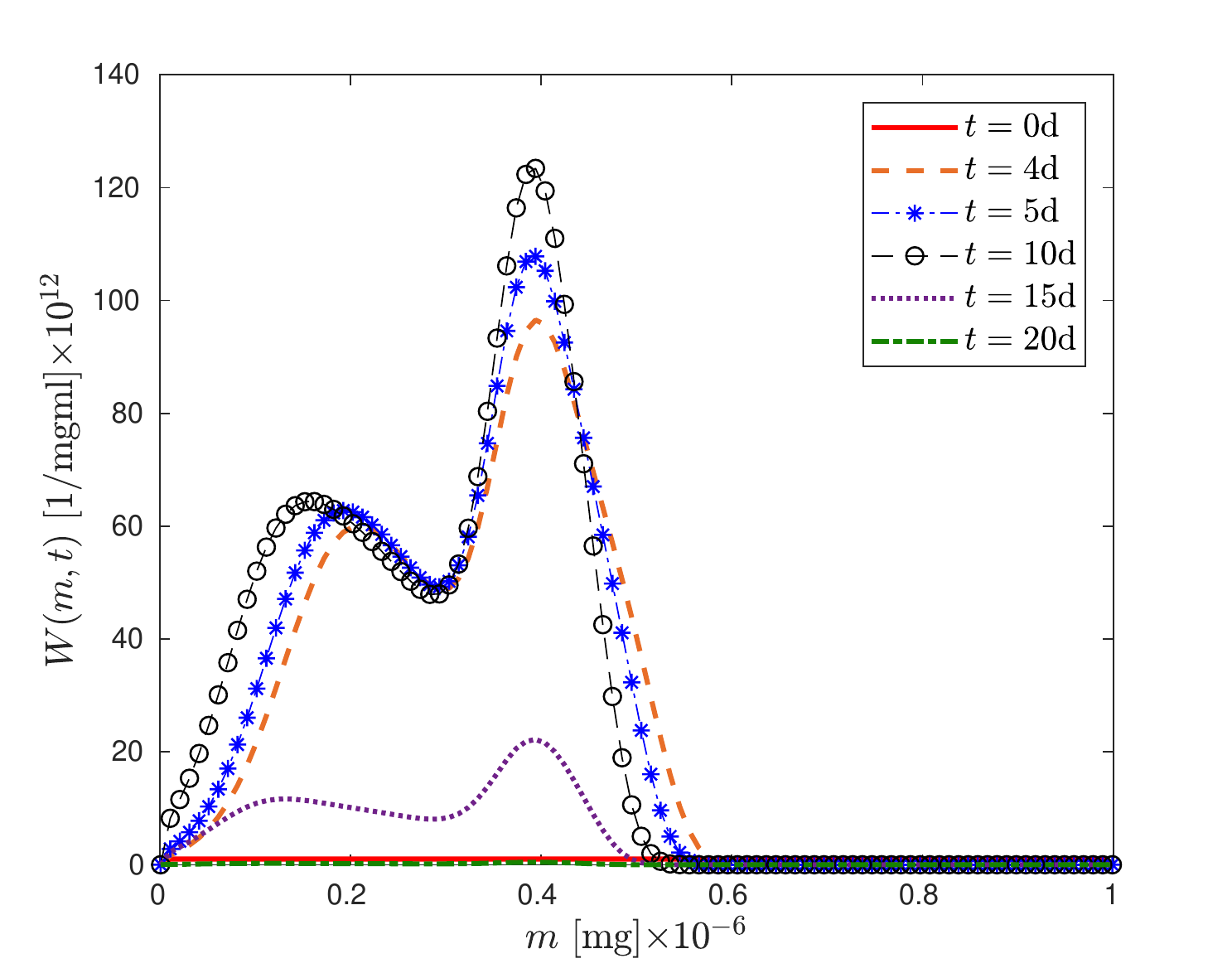}
		\caption{Mass discretization with 100 mass cells and time discretization with $h=\dfrac{1}{144}\approx0.0069$.}
		%\end{figure}
	\end{subfigure}
	\begin{subfigure}[c]{0.49\textwidth}
		\vspace{-1em}
		%\column{0.47\textwidth}
		%\begin{figure}[H]
		%				\hspace*{-1em}
		\includegraphics[width=17em]{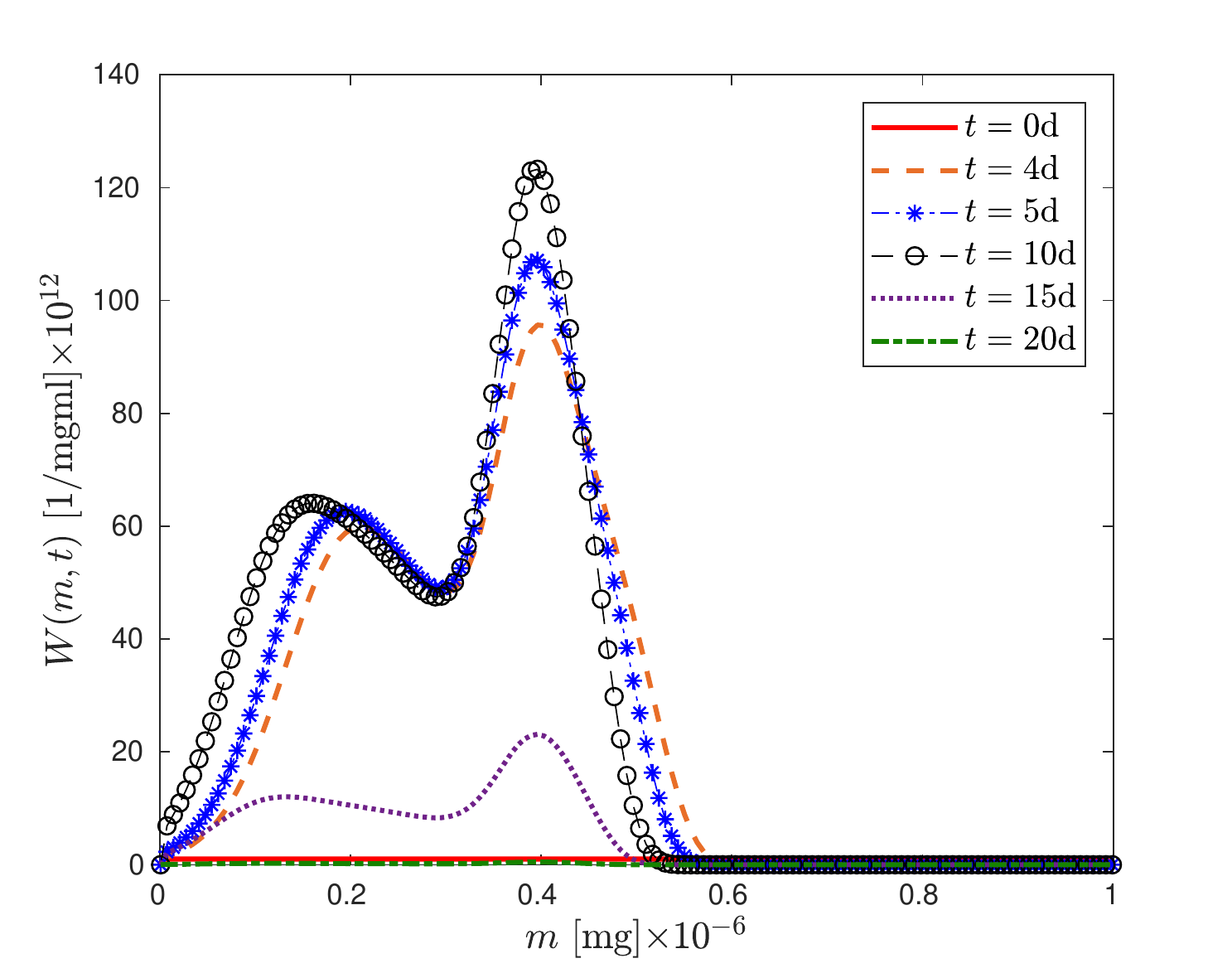}
		\caption{Mass discretization with 150 mass cells and time discretization with $h=\dfrac{1}{192}\approx0.0052$.}
		%\end{figure}
	\end{subfigure}
	\centering
	\caption{Cell number density for different cell masses for twenty days with constant initial distribution and implicit trapezoidal rule: Mass discretization with 30, 50, 100, 150 cells.}
	\label{Fig:CND20CNconstant3050100150masscells}
\end{figure}From now on, $150$ mass cells for the mass discretization are used and as above the time step is always chosen with respect to the CFL condition and with respect to the convergence of the Newton's method.
%Thereby, on the left hand side the results coming from the second order BDF method with a time step of $\frac{1}{504}\approx0.0020$ are shown and on the right hand side you see the results coming from the implicit trapezoidal rule with a time step of $\frac{1}{192}\approx0.0052$. Thereby, the different time instances are highlighted in the same way as in Figure~\ref{Fig:CNDfirstCNconstant3050masscells}.
	\begin{figure}[htbp]
				\centering
	\begin{subfigure}[c]{0.49\textwidth}
		%\vspace{-.8em}
		%\begin{figure}[H]
		\includegraphics[width=17em]{Imagesnew/ResultsPlotsCN/figure10constant150-eps-converted-to}
		\caption{Constant initial distribution.}
		%\end{figure}
	\end{subfigure}
	\begin{subfigure}[c]{0.49\textwidth}
		%\column{0.47\textwidth}
		%\begin{figure}[H]
		\includegraphics[width=17em]{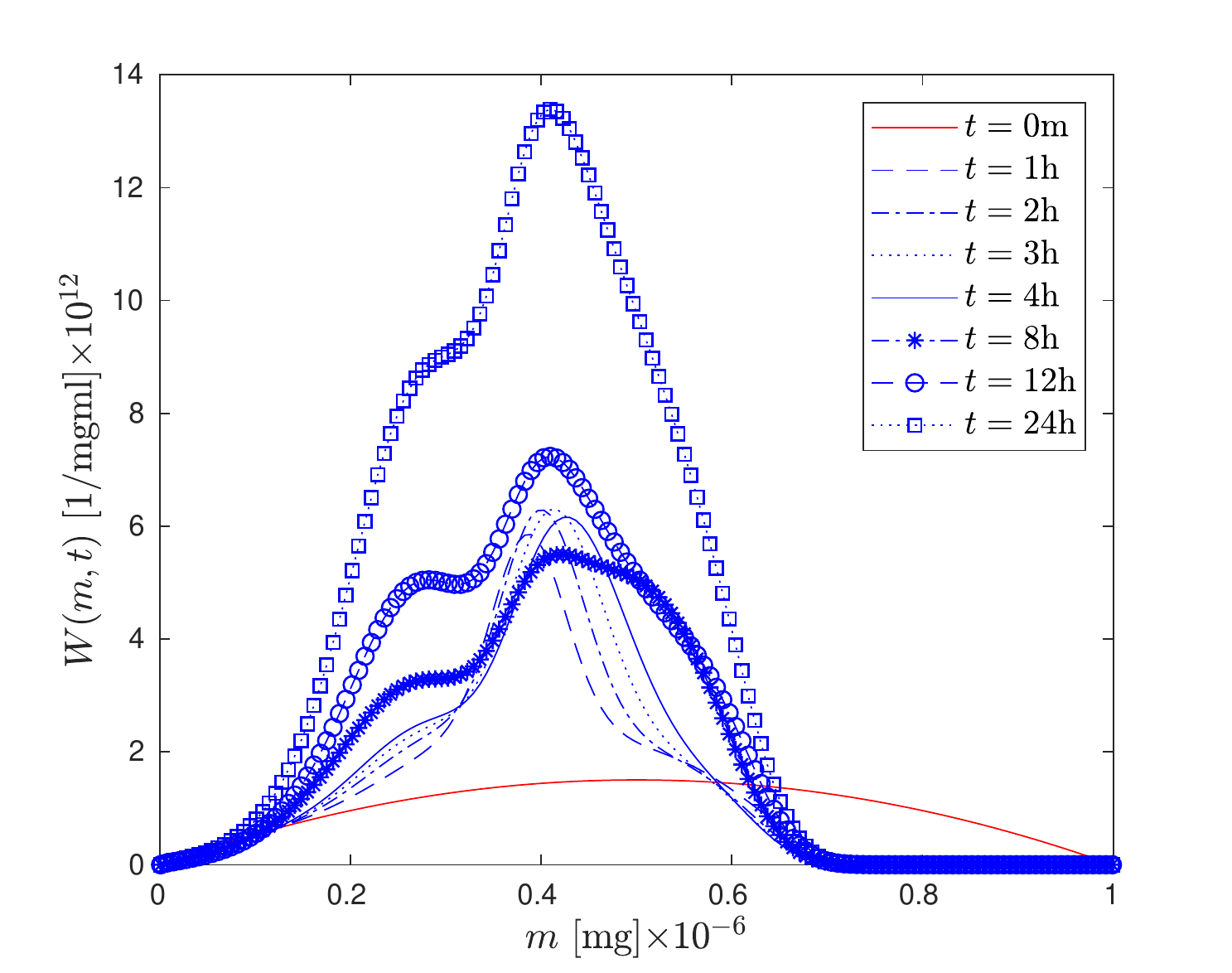}
		\caption{Beta initial distribution.}
		%\end{figure}
	\end{subfigure}
	\begin{subfigure}[c]{0.49\textwidth}
		\vspace{-1em}
		%\vspace{-.8em}
		%\vspace{-1em}
		%\begin{figure}[H]
		%			\hspace*{-1em}
		\includegraphics[width=17em]{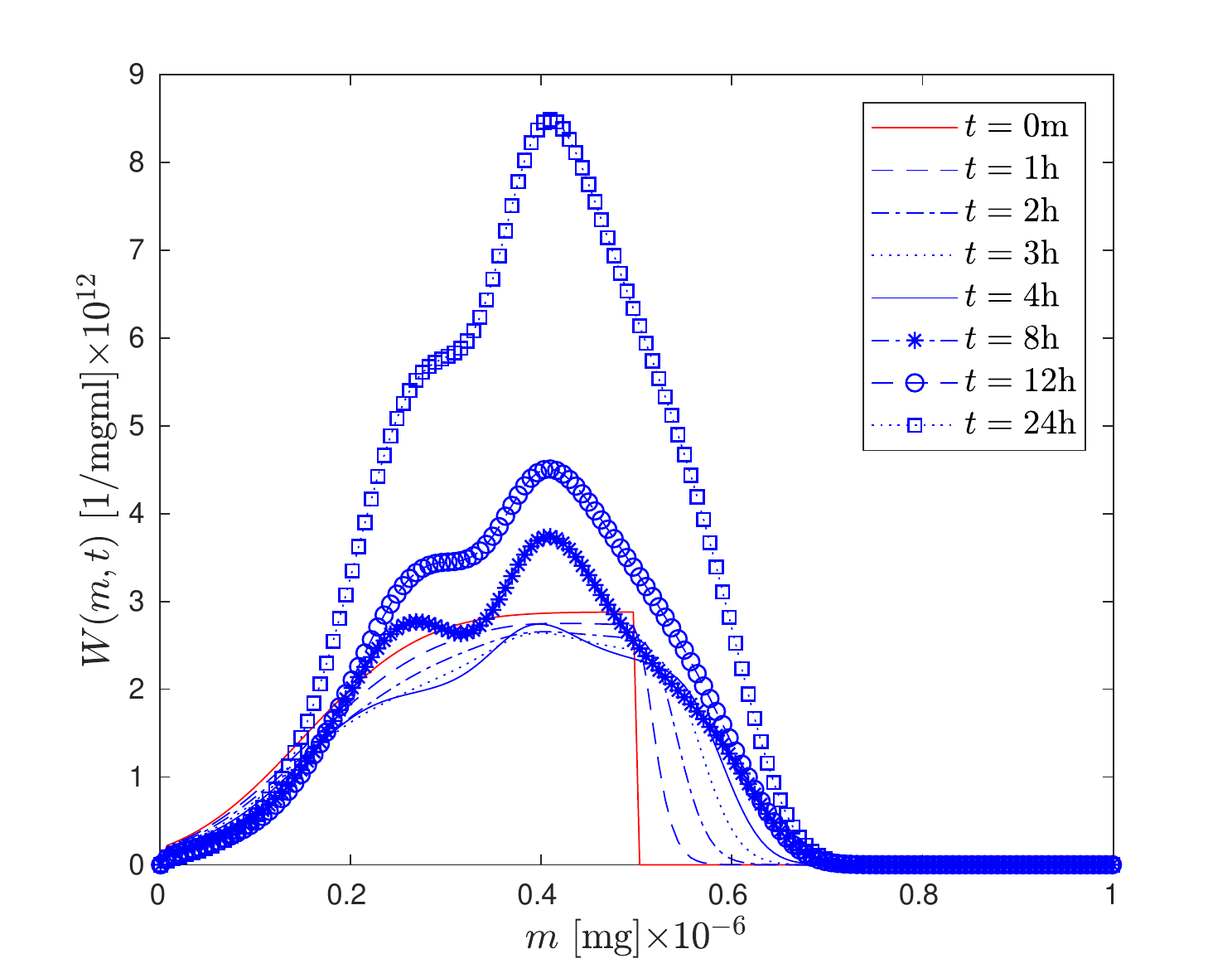}
		\caption{Small to medium cell initial distribution.}
		%\end{figure}
	\end{subfigure}
	\begin{subfigure}[c]{0.49\textwidth}
		\vspace{-1em}
		%\column{0.47\textwidth}
		%\begin{figure}[H]
		%				\hspace*{-1em}
		\includegraphics[width=17em]{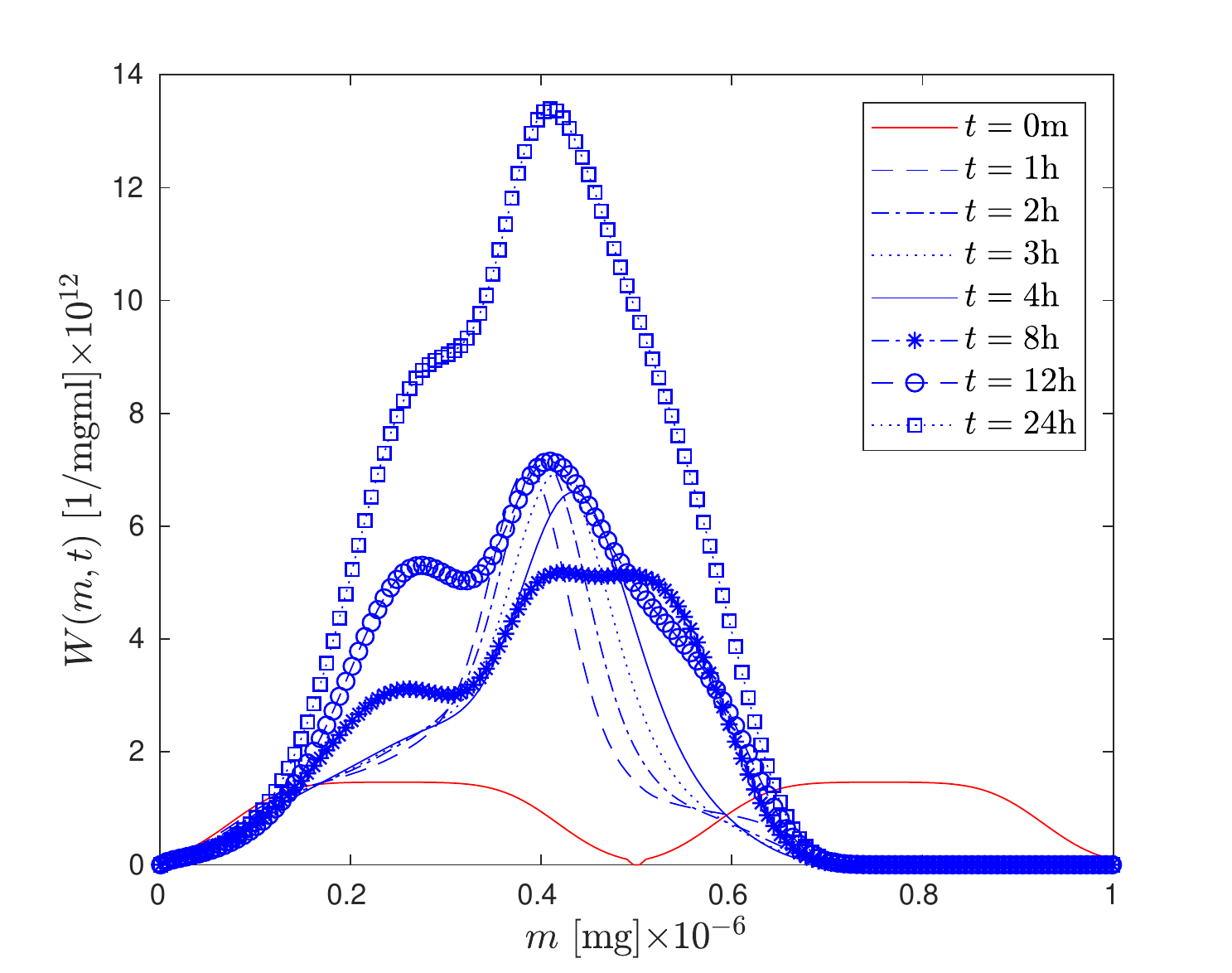}
		\caption{Two normal peak initial distribution.}
		%\end{figure}
	\end{subfigure}
	\centering
	\caption{Cell number density for different cell masses for twenty days with different cell initial distributions and implicit trapezoidal rule: Mass discretization with 150 cells and time discretization with $h=\dfrac{1}{192}\approx0.0052$.}
	\label{Fig:CNDfirstCNdiffdist150masscells}
\end{figure}
	\begin{figure}[htbp]
				\centering
	\begin{subfigure}[c]{0.49\textwidth}
		%\vspace{-.8em}
		%\begin{figure}[H]
		\includegraphics[width=17em]{Imagesnew/ResultsPlotsCN/figure11constant150-eps-converted-to}
		\caption{Constant initial distribution.}
		%\end{figure}
	\end{subfigure}
	\begin{subfigure}[c]{0.49\textwidth}
		%\column{0.47\textwidth}
		%\begin{figure}[H]
		\includegraphics[width=17em]{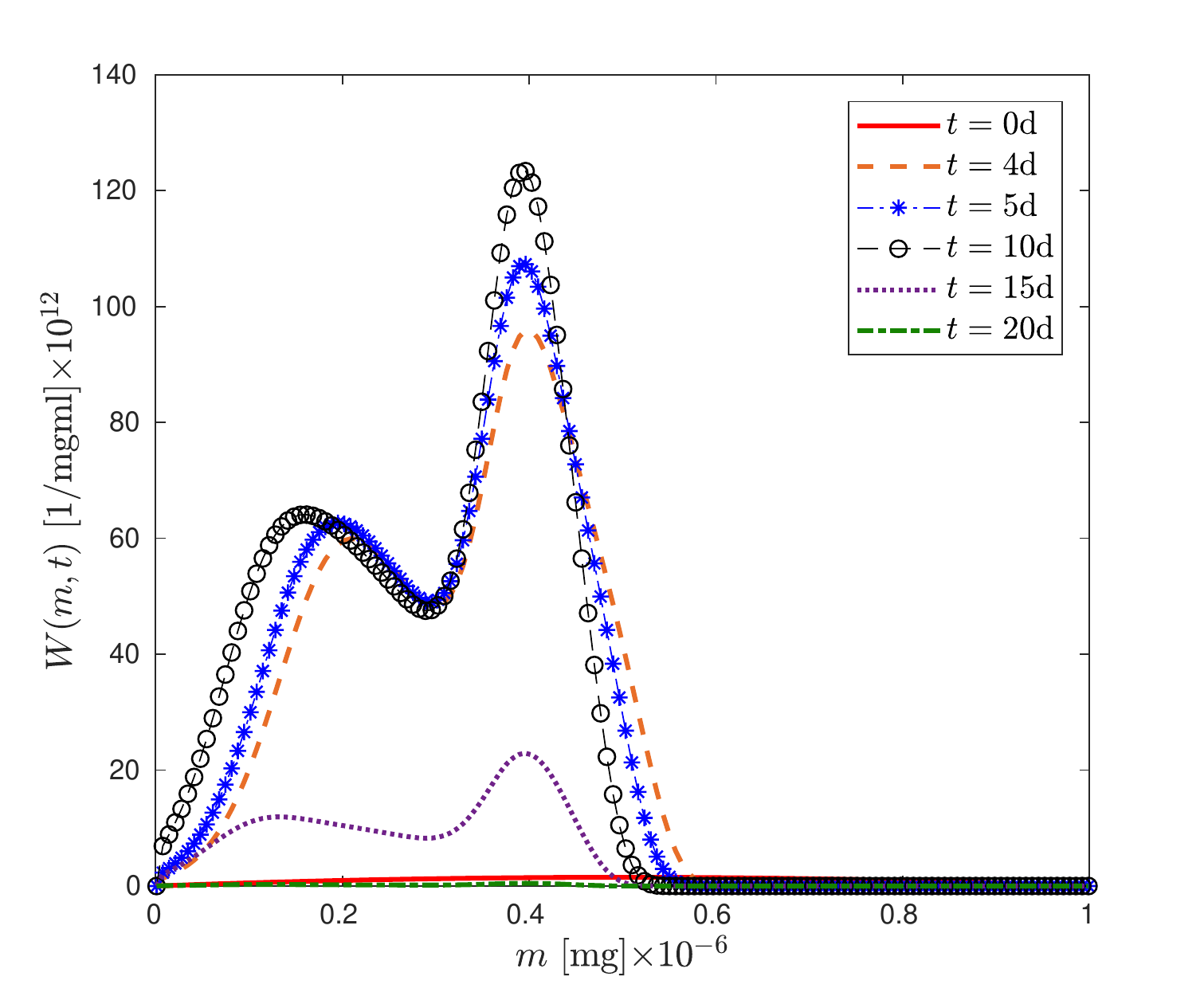}
		\caption{Beta initial distribution.}
		%\end{figure}
	\end{subfigure}
	\begin{subfigure}[c]{0.49\textwidth}
		\vspace{-1em}
		%\vspace{-.8em}
		%\vspace{-1em}
		%\begin{figure}[H]
		%			\hspace*{-1em}
		\includegraphics[width=17em]{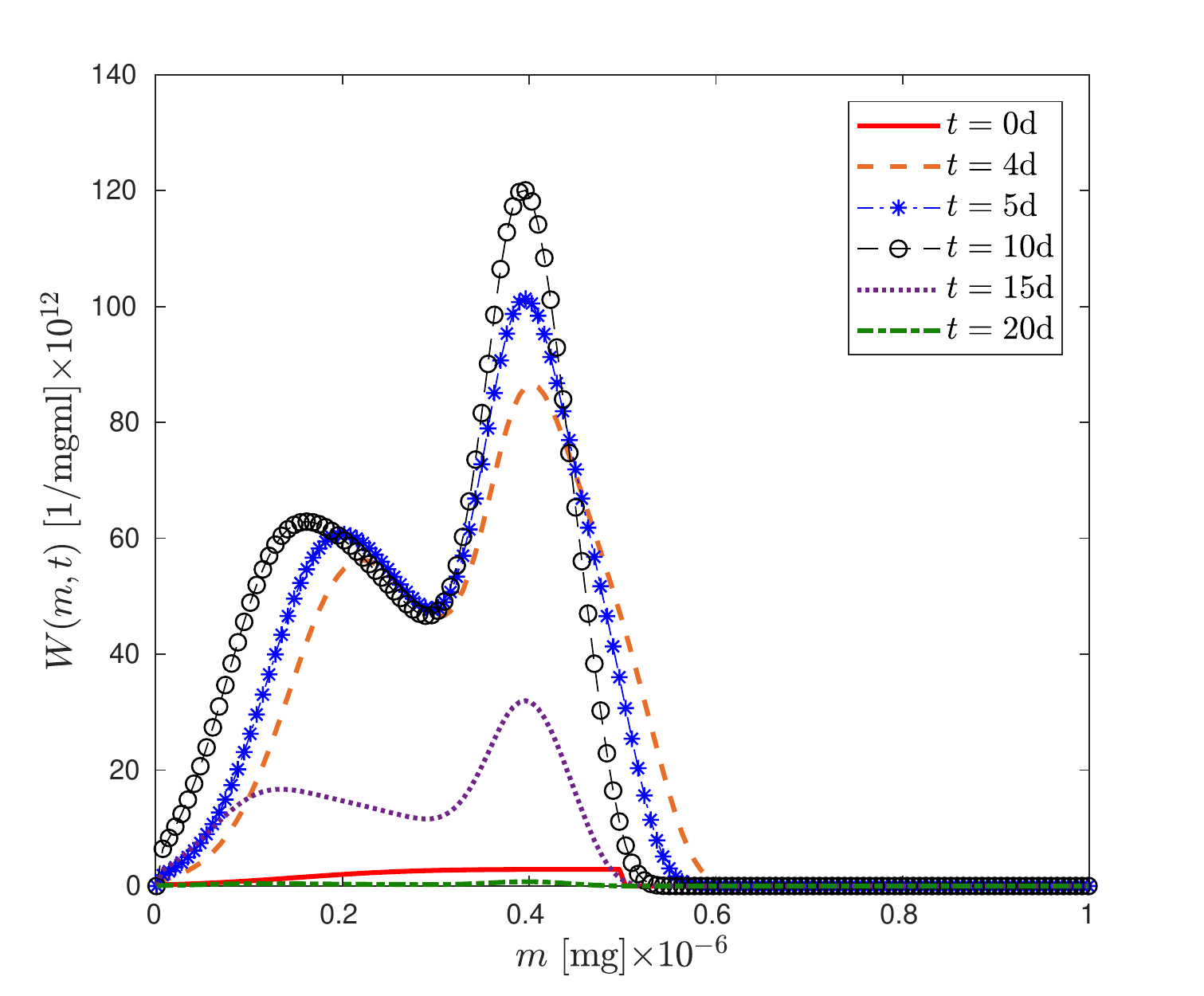}
		\caption{Small to medium cell initial distribution.}
		%\end{figure}
	\end{subfigure}
	\begin{subfigure}[c]{0.49\textwidth}
		\vspace{-1em}
		%\column{0.47\textwidth}
		%\begin{figure}[H]
		%				\hspace*{-1em}
		\includegraphics[width=17em]{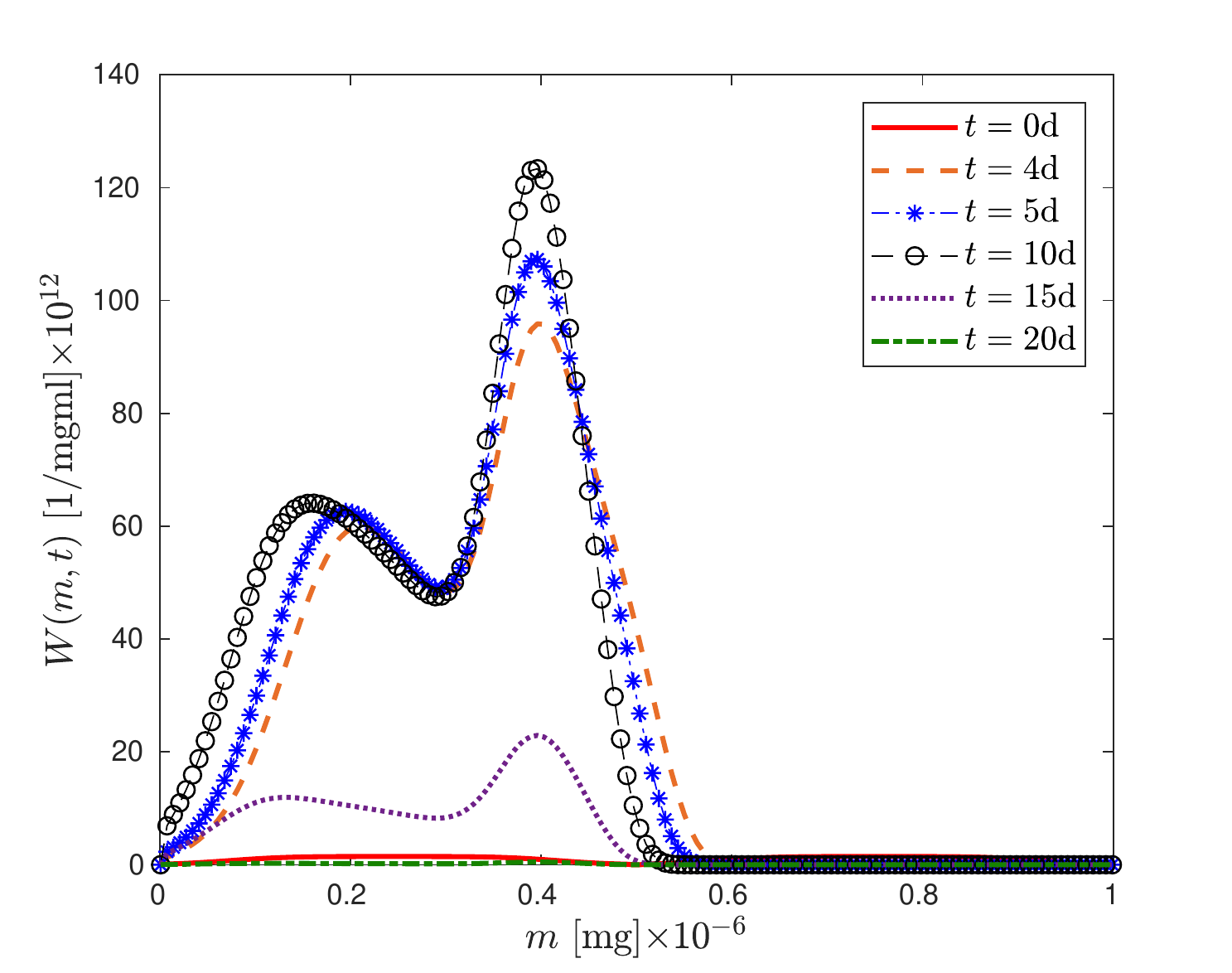}
		\caption{Two normal peak initial distribution.}
		%\end{figure}
	\end{subfigure}
	\centering
	\caption{Cell number density for different cell masses for twenty days with different cell initial distributions and implicit trapezoidal rule: Mass discretization with 150 cells and time discretization with $h=\dfrac{1}{192}\approx0.0052$.}
	\label{Fig:CND20CNdiffdist150masscells}
\end{figure}In Figures~\ref{Fig:CNDfirstCNdiffdist150masscells} and \ref{Fig:CND20CNdiffdist150masscells}, the cell number density with respect to the different cell masses for some time instances in the first 24 hours and for the twenty days of fermentation is illustrated for different cell initial distributions respectively, i.e. a constant initial distribution, a beta initial distribution, a small to medium cell initial distribution and a two normal peak initial distribution. In Figure~\ref{Fig:CNDfirstCNdiffdist150masscells}, all the large cells are gone after two hours, i.e.\ cell division already took place and they were divided into daughter and mother cells again as explained in Section~\ref{sec:IDEmod}. According to \citet{morgan2007cell}, experiments have shown that a cell-cycle lasts $90$ to $120$ minutes or in other words yeast cells divide every $90$ to $120$ minutes.\\ The trajectories for the different initial distributions look very similar for Figures~(a)-(c) in Figure~\ref{Fig:CNDfirstCNdiffdist150masscells} but differ more remarkably for (c), in particular for the first 12 hours.\\ 
For the next days, as illustrated in Figure~\ref{Fig:CND20CNdiffdist150masscells}, the trajectories do not visibly differ much either apart from the one for the small to medium cell initial distribution. Overall, for all distributions two peaks appear which grow until day ten. One of these two peaks forms for cells of small mass and the other one for cells of medium mass until the point where division can take place. Later in time, more and more cells die due to the increasing ethanol concentration. The peak for the smaller cells becomes larger than the one for the medium cells. After twenty days there are mainly just small cells left.\\
\begin{figure}[htbp]
	\centering
	\includegraphics[scale=.6]{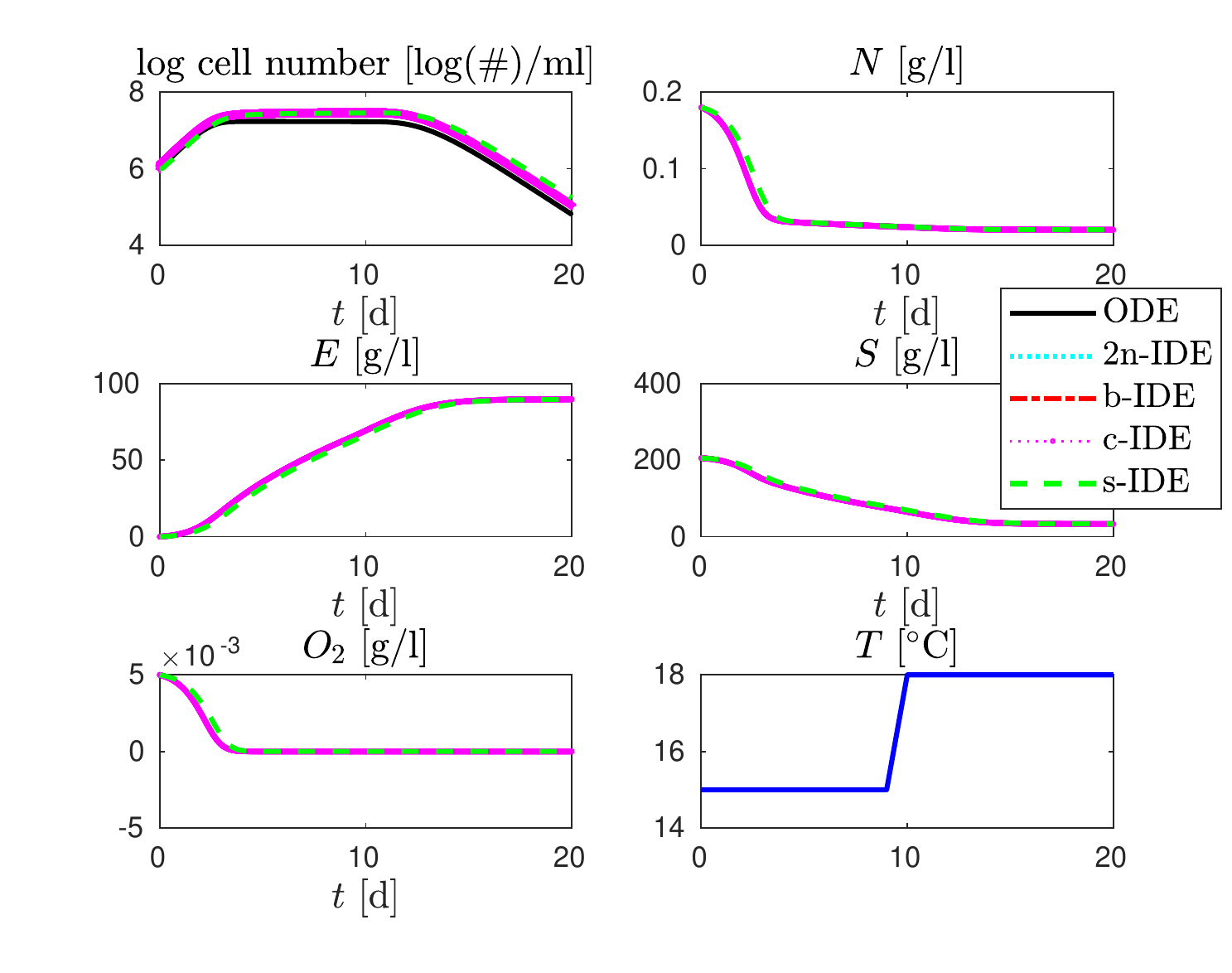}
	\centering
	\caption{Comparison of log cell number, substrate/product concentration trajectories, temperature profile for entire time horizon for IDE model (2n-IDE: 2 normal peak initial distribution, b-IDE: beta initial distribution, s-IDE: small to medium cell initial distribution) and ODE model (ODE).}
	\label{Fig:CNDallCNdiffdist150masscells}
\end{figure}Figure~\ref{Fig:CNDallCNdiffdist150masscells} illustrates the state trajectories, i.e.\ the logarithm of the cell number in $\log(\#)$/ml for the yeast and all other substrate concentration developments in g/l. For both methods, the yeast goes through all its growth phases apart from the lag phase which was not included in the model. The yeast growth phases can be found in \citet[Chapter~3]{mikrowein}. Sugar is consumed for yeast activity and its conversion into ethanol until there is only 18 g/l residual sugar left after twenty days and ethanol is accumulated up to a final ethanol concentration of 99 g/l which corresponds to approximately $12.5\%$. The oxygen is consumed within the first few days, from where nitrogen is consumed more slowly and approximately $0.019$ g/l still remains after twenty days of fermentation time.\\
For all of the distributions, the initial cell amount comprises 1 million cells per ml. In Figure~\ref{Fig:CNDallCNdiffdist150masscells} the four initial distributions (2n-IDE, b-IDE, c-IDE, s-IDE) already considered for Figure~\ref{Fig:CNDfirstCNdiffdist150masscells} and Figure~\ref{Fig:CND20CNdiffdist150masscells} are compared to the results coming from the model based on ODEs without modeling the yeast cell dynamics on a single-cell level coming from the simulation of a comparable ODE model version \citep{schenk17, schenk2018docthesis}. The main differences occur for yeast growth and yeast death but overall the differences are insignificant. The computation times here range from 116.29s to 180.37 CPUs for the different IDE initial distributions with s-IDE being the least computationally expensive and c-IDE being the most computationally expensive (b-IDE: 137.73 CPUs, 2n-IDE: 124.02 CPUs). In contrast, the solution of the ODE model only takes 0.39 CPUs. Thus, the IDE model computation is about 298 to 462 times as expensive as the ODE model computation.

			\section{Conclusions}\label{sect:concl}
			To determine the impact of the yeast cell mass initial distribution and the impact of the cell dynamics in general, a new model, based on a combination components previously introduced in the literature, was proposed.
			This model describes the reaction kinetics based on Micha\"elis-Menten kinetics and includes the oxygen and ethanol-dependent death. Furthermore it describes the single-cell behavior of the yeast with respect to growth, cell division and death. The resulting population balance model is based on strongly nonlinear weakly hyperbolic partial/ordinary integro-differential equations.
			For its solution a numerical solution scheme based on a finite volume approach was introduced and combined with the implicit trapezoidal rule. Existence and uniqueness of the solution of a simplified version investigating a semilinear population balance model was shown based on semigroup theory. The more complex semilinear case with linear velocity in $m$ and the quasilinear case are currently under investigation.\\
			The numerical scheme was applied to the proposed model and the corresponding numerical results were presented with respect to a comparison of different cell mass initial distributions and a comparison to a reduced ODE model.
			The results for a constant, beta, small to medium and a two normal peak cell initial distribution were compared. The results reveal that the impact of the initial distribution is smaller than expected. The cell number densities differ significantly for the first 24 hours but in the long run differ hardly.
			The simulation results for the IDE model with the different initial cell distributions were compared to results without modeling the yeast cell dynamics on a single-cell level coming from the simulation of a comparable ODE model version. The main differences occur for the yeast growth and yeast death. Thus, the impact of modeling the cell dynamics is much less than expected and almost negligible. So, it is questionable whether it brings much more value to use this as the descriptive process model with respect to process optimization. Moreover, this would also require more data related to the dynamics in order to estimate the dynamic parameters. 
			To collect this kind of data is very expensive or not even possible in the scope as it would be useful. Another bottleneck of using this model for process optimization is that its solution is computationally expensive.\\
			Nevertheless, the investigated model is very interesting from a mathematical point of view. The outcomes of this work can be useful for other models based on integro-differential equations from other fields of application in finance, engineering sciences including physical and other biological processes.\\
			Moreover, the outcomes for the particular wine fermentation model studied here can be very useful for winemakers in understanding the evolution of the yeast cell population with respect to cell mass and can support process engineers with useful mathematical investigations and making a process model choice. This is essential for process optimization, i.e. for increasing profit and process efficiency.
			\appendix
			\section*{Appendix A. The Jacobian of the Right Hand Side $f$ in Section~\ref{sect:resultsdiscuss}}\label{app:Jacobianf}
We start with the first component of $f$, namely $f_{w_i}$, and form the derivatives with respect to all other components, in detail
\begin{align}
\dfrac{\partial f_{w_i}}{\partial w_{i}}&= \dfrac{1}{\Delta m}\left(-r_{\epsilon}(m_{i+1},N,S,O)-2\sum_{j=1}^{N_W-1} K_{ij}-\int_{m_i}^{m_{i+1}}\Gamma(m)\; dm-\Phi(E)-k_d\right),\notag\\[1em]
%%%%%%%%%%%%%%%%%%%%%%%%%
\dfrac{\partial f_{w_i}}{\partial w_{i-1}}&=\dfrac{1}{\Delta m}(r_{\epsilon}(m_i,N,S,O)),\notag\\[1em]
%%%%%%%%%%%%%%%%%%%%%%%%
\dfrac{\partial f_{w_i}}{\partial N}&= -\dfrac{1}{\Delta m}\left(\left(-\dfrac{r_{\epsilon}(m_{i+1},N,S,O))}{K_N+N}+\dfrac{S\mu_{max}(T)m_{i+1}}{(K_N+N)(K_{S_1}+S)}\left(\dfrac{O}{K_O+O}+\epsilon\right)\right)w_i\right.\notag\\&\hspace{0.5cm} \left.+\left(\dfrac{r_{\epsilon}(m_i,N,S,O)}{K_N+N}-\dfrac{S\mu_{max}(T)m_{i}}{(K_N+N)(K_{S_1}+S)}\left(\dfrac{O}{K_O+O}+\epsilon\right)\right)w_{i-1}\right),\notag\\[1em]
%%%%%%%%%%%%%%%%%%%%%%%%%%%%%%%%%%%
\dfrac{\partial f_{w_i}}{\partial E}&=  k_{d_1}k_{d_2}(E-tol)^2/(\pi(1+{k_{d_1}}^2(E-tol)^2))\notag\\&\hspace{0.5cm} +2k_{d_2}(E-tol)(0.5+\atan(k_{d_1}(E-tol))/\pi) w_i,\notag\\[1em]
%%%%%%%%%%%%%%%%%%%%
\dfrac{\partial f_{w_i}}{\partial S}&= -\dfrac{1}{\Delta m}\left(\left(-\dfrac{r_{\epsilon}(m_{i+1}, N, S, O)}{(K_{S_1}+S)}+\dfrac{N\mu_{max}(T)m_{i+1}}{((K_N+N)(K_{S_1}+S))}\left(\dfrac{O}{K_O+O}+\epsilon\right)\right)w_i\right.\notag\\&\hspace{0.5cm} \left.+\left(\dfrac{r_{\epsilon}(m_i, N, S, O)}{(K_{S_1}+S)}-\dfrac{N\mu_{max}(T)m_i}{((K_N+N)(K_{S_1}+S))}\left(\dfrac{O}{K_O+O}+\epsilon\right)\right)w_{i-1}\right)\quad\text{and}\notag\\[1em]
%%%%%%%%%%%%%%%%%%%%%%%
\dfrac{\partial f_{w_i}}{\partial O}&= -\dfrac{1}{\Delta m}\left(\left(-\dfrac{r(m_{i+1}, N, S, O)}{(K_O+O)}+\dfrac{SN\mu_{max}(T)m_{i+1}}{((K_N+N)(K_O+O)(K_{S_1}+S))}\right)w_i\right.\notag\\&\hspace{0.5cm} \left.+\left(\dfrac{r(m_i, N, S, O)}{(K_{O}+O)}-\dfrac{SN\mu_{max}(T)m_i}{((K_N+N)(K_O+O)(K_{S_1}+S))}\right)w_{i-1}\right).\notag
\end{align}
Let us continue with the derivatives of $f_N$ with respect to all states apart from $E$, i.e.\
\begin{align}
% % % % % % % % % % % % % % % % % % % % % % % % % % % % % % 
%%%%%%%%%%%%%%%%%%%%%%%%%For f_N%%%%%%%%%%%%%
\dfrac{\partial f_N}{\partial w_i}&=-k_1\tilde{r}_{\epsilon}(N,S,O)\frac{(m_i+m_{i+1})}{2}\Delta m,\notag\\[1em]
%%%%%%%%%%%%%%%%%%%%%%%%
\dfrac{\partial f_N}{\partial N}&= -k_1\sum_{i=1}^{N_W-1}\Delta m \frac{(m_i+m_{i+1})}{2}w_i\left(-\dfrac{\tilde{r}_{\epsilon}(N,S,O)}{K_N+N}\right.\notag\\&\hspace{0.5cm} \left.+\dfrac{\mu_{max}(T)S}{(K_N+N)(K_{S_1}+S)}\left(\dfrac{O}{K_O+O}+\epsilon\right)\right),\notag\\[1em]
%%%%%%%%%%%%%%%%%%%%%%%%%%
%&\dfrac{\partial f_N}{\partial E}=0\\
%%%%%%%%%%%%%%%%%%%%%%%%%
\dfrac{\partial f_N}{\partial S}&= -k_1\sum_{i=1}^{N_W-1}\Delta m \frac{(m_i+m_{i+1})}{2}w_i\left(-\dfrac{\tilde{r}_{\epsilon}(N,S,O)}{K_{S_1}+S}\right.\notag\\&\hspace{0.5cm} \left.+\dfrac{\mu_{max}(T)N}{(K_N+N)(K_{S_1}+S)}\left(\dfrac{O}{K_O+O}+\epsilon\right)\right)\quad\text{and}\notag %
\end{align}%\\[1em]
%%%%%%%%%%%%%%%%%%%%%%%%%
\begin{align}
\dfrac{\partial f_N}{\partial O}&= -k_1\sum_{i=1}^{N_W-1}\Delta m \frac{(m_i+m_{i+1})}{2}w_i\left(-\dfrac{\tilde{r}(N,S,O)}{K_{O}+O}\right.\notag\\ &\hspace{0.5cm}\left.+\dfrac{\mu_{max}(T)NS}{(K_N+N)(K_O+O)(K_{S_1}+S)}\right).\notag
% % % % % % % % % % % % % % % %
\end{align}
Moreover, the derivatives of $f_E$ with respect to $w_i$, $E$ and $S$ are represented by\begin{align}
%\notag
%\end{align}
%\begin{align}
%%%%%%%%%%%%%%%%%%%%%%%%For f_E%%%%%%%%%%%%%%
\dfrac{\partial f_E}{\partial w_i}&= \tilde{q}(S,E)\frac{(m_i+m_{i+1})}{2}\Delta m,\notag\\[1em]
\dfrac{\partial f_E}{\partial E}&= \sum_{i=1}^{N_W-1}\Delta m \frac{(m_i+m_{i+1})}{2}w_i\left(\dfrac{(-\tilde{q}(S,E))}{E+K_E(T)}\right)\quad\text{and}\notag\\[1em]
%%%%%%%%%%%%%%%%%%%%%%%%%%
\dfrac{\partial f_E}{\partial S}&= \sum_{i=1}^{N_W-1}\Delta m \frac{(m_i+m_{i+1})}{2}w_i\left(\dfrac{(-\tilde{q}(S,E))}{S+K_{S_2}}+\dfrac{\beta_{max}(T)K_E(T)}{(E+K_E(T))(K_{S_2}+S)}\right).\notag
\end{align}Furthermore, for the derivatives of $f_S$ with respect to all states we obtain
\begin{align}
% % % % % % % % % % % % % % % %
%%%%%%%%%%%%%%%%%%%%%%%%%%%%For f_S%%%%%%%%%%%%
\dfrac{\partial f_S}{\partial w_i}&= \left(-k_2\tilde{q}(S,E)-k_3\tilde{r}_{\epsilon}(N,S,O)\right)0.5(m_i+m_{i+1})\Delta m,\notag\\[1em]
%%%%%%%%%%%%%%%%%%%%%%%%%%%%%%%%%%%%%%%%%%%%%%%%
\dfrac{\partial f_S}{\partial N}&= -k_3\sum_{i=1}^{N_W-1}\Delta m \frac{(m_i+m_{i+1})}{2}w_i\left(-\dfrac{\tilde{r}_{\epsilon}(N,S,O)}{K_N+N}\right.\notag\\&\hspace{0.5cm}\left.+\dfrac{\mu_{max}(T)S}{(K_N+N)(K_{S_1}+S)}\left(\dfrac{O}{K_O+O}+\epsilon\right)\right),\notag\\[1em]
%%%%%%%%%%%%%%%%%%%%%
\dfrac{\partial f_S}{\partial E}&= -k_2\sum_{i=1}^{N_W-1}\Delta m \frac{(m_i+m_{i+1})}{2}w_i\left(-\dfrac{\tilde{q}(S,E)}{E+K_E(T)}\right),\notag\\[1em]
%%%%%%%%%%%%%%%%%%%%
\dfrac{\partial f_S}{\partial S}&= -k_3\sum_{i=1}^{N_W-1}\Delta m \frac{(m_i+m_{i+1})}{2}w_i\left(-\dfrac{\tilde{r}_{\epsilon}(N,S,O)}{K_{S_1}+S}\right.\notag\\&\hspace{0.5cm}\left.+\dfrac{\mu_{max}(T)N}{(K_N+N)(K_{S_1}+S)}\left(\dfrac{O}{K_O+O}+\epsilon\right)\right)\notag\\&\hspace{0.5cm} -k_2\sum_{i=1}^{N_W-1}\Delta m \frac{(m_i+m_{i+1})}{2}w_i\left(-\dfrac{\tilde{q}(S,E)}{S+K_{S_2}}+\dfrac{\beta_{max}(T)K_E(T)}{(E+K_E(T))(K_{S_2}+S)}\right)\;\;\text{and}\notag\\[1em]
%%%%%%%%%%%%%%%%%%%%
\dfrac{\partial f_S}{\partial O}&= -k_3\sum_{i=1}^{N_W-1}\Delta m \frac{(m_i+m_{i+1})}{2}w_i\left(-\dfrac{\tilde{r}(N,S,O)}{K_O+O}\right.\notag\\&\left.+\dfrac{\mu_{max}(T)SN}{(K_N+N)(K_{S_1}+S)(K_O+O)}\right).\notag
% % % % % % % % % % %
\end{align}
Finally, the derivatives of $f_O$ with respect to $w_i$, $N$, $S$ and $O$ look like the following
\begin{align}
%\notag
%\end{align}
%\begin{align}
%%%%%%%%%%%%%%%%%%%For f_{O}%%%%%%%%%%%%%%%%%
\dfrac{\partial f_{O}}{\partial w_i}&= -k_4\tilde{r}(N,S,O)\Delta m \frac{(m_i+m_{i+1})}{2},\notag\\[1em]
%%%%%%%%%%%%%%%%%%%%%%%%
\dfrac{\partial f_{O}}{\partial N}&= -k_4\sum_{i=1}^{N_W-1}\Delta m \frac{(m_i+m_{i+1})}{2}w_i\left(-\dfrac{\tilde{r}(N,S,O)}{K_N+N}\right.\notag\\&\left.+\dfrac{\mu_{max}(T)SO}{(K_N+N)(K_O+O)(K_{S_1}+S)}\right),\notag\\[1em]
%%%%%%%%%%%%%%%%%%%%%%%%%%
%&\dfrac{\partial f_{O}}{\partial E}=0\\
%%%%%%%%%%%%%%%%%%%%%%%%%
\dfrac{\partial f_{O}}{\partial S}&= -k_4\sum_{i=1}^{N_W-1}\Delta m \frac{(m_i+m_{i+1})}{2}w_i\left(-\dfrac{\tilde{r}(N,S,O)}{K_{S_1}+S}\right.\notag\\&\hspace{0.5cm}\left.+\dfrac{\mu_{max}(T)NO}{(K_N+N)(K_O+O)(K_{S_1}+S)}\right)\quad\text{and}\notag\\[1em]
%%%%%%%%%%%%%%%%%%%%%%%%%
\dfrac{\partial f_{O}}{\partial O}&= -k_4\sum_{i=1}^{N_W-1}\Delta m \frac{(m_i+m_{i+1})}{2}w_i\left(-\dfrac{\tilde{r}(N,S,O)}{K_{O}+O}+\right.\notag\\&\left.\dfrac{\mu_{max}(T)NS}{(K_N+N)(K_O+O)(K_{S_1}+S)}\right).\notag
% % % % % % % % % % % % % % %
\end{align}
All other derivatives apart from the derivatives of the boundary conditions are equal to zero.\\ The derivatives of the boundary conditions for the yeast population are of the following form
\begin{align}
\dfrac{\partial f_{w_0}}{\partial w_0}&=\dfrac{1}{\Delta m}r_{\epsilon}(m_{min},N,S,O).\notag%\quad\text{and }\\[1em]
%\dfrac{\partial f_{w_{N_W}}}{\partial w_{N_W}}&=\dfrac{1}{\Delta m}r_{\epsilon}(m_{max},N,S,O).\notag
\end{align}
\section*{Acknowledgements}
\raggedright
This research work was funded by the German Federal Ministry of Education and Research (BMBF, Bundesministerium f{\"u}r Bildung und Forschung) within the collaborative project R\OE NOBIO (Robust energy-optimization of fermentation processes for the production of biogas and wine) with contract number 05M2013UTA. Moreover, it has been partly supported by the German Research Foundation (DFG) within the research training group 2126 Algorithmic Optimization. The authors would also like to gratefully acknowledge the funding by the Ministerio de Economía y Competitividad (MINECO) of the Spanish Government through BCAM Severo Ochoa accreditation SEV-2017-0718, and the funding by the Basque Government under the BERC 2018e2021 Program and the grant "Artificial Intelligence in BCAM" number EXP. 2019/00432 for postdoctoral fellowship (to C.S.). Moreover, the authors thank Leonhard Frerick for his support with his analytical expertise. Additionally, the authors are grateful to Simone Rusconi (Basque Center for Applied Mathematics) and Marta Lewicka (University of Pittsburgh) for some useful comments which improved this work. Furthermore, many thanks goes to our joint and associated partners within the project R\OE NOBIO.
% Bibliography
%Additionally, the authors are grateful to Simone Rusconi and Marta Lewicka (or maybe author) for some useful comments which improved this work. 
%-----------------------------------------------------------------
\bibliography{literaturenumpaper}
\end{document}